\documentclass{ifacconf}

\usepackage{graphicx}      
\usepackage{natbib}        

\usepackage{cuted}
\usepackage{graphics} 
\usepackage{epsfig} 
\usepackage{mathptmx} 
\usepackage{times} 
\usepackage{amsmath} 
\usepackage{amssymb} 
\usepackage{booktabs} \usepackage{multirow} 
\usepackage{url} 
\DeclareMathAlphabet{\mathcal}{OMS}{cmsy}{m}{n} 
\usepackage{algorithm}
\usepackage{algpseudocode}
\newtheorem{assumptions}{Assumptions}
\newtheorem{theorem}{Theorem}

\newtheorem{definition}{Definition}

\newenvironment{proof}{%
  \noindent \textit{Proof:} \quad
}{
  \hfill $\square$ \vspace{1ex} 
}

\usepackage{tikz} 
\usepackage{tikz-cd}
\usetikzlibrary{positioning, shapes.geometric, arrows.meta}
\usetikzlibrary{matrix, positioning, shapes.geometric, arrows} 
\tikzstyle{block} = [rectangle, draw, rounded corners, minimum width=3cm, minimum height=1.2cm, align=center] 
\tikzstyle{data} = [rectangle, draw, minimum width=3cm, minimum height=1.2cm, align=center] \tikzstyle{arrow} = [thick, ->, >=stealth]
\usepackage{subcaption}
\begin{document}
\begin{frontmatter}
\title{A PAC-Bayes Approach for Controlling Unknown Linear Discrete-time Systems}
\author[First]{Yujia Luo}
\author[First]{Ye Pu}
\author[First]{Jonathan H. Manton}
\author[First]{Jingge Zhu}

\address[First]{
The University of Melbourne, Australia.\\
Emails: \{yujia.luo.1, ye.pu, jmanton, jingge.zhu\}@unimelb.edu.au
}

\begin{abstract}
This paper presents a PAC-Bayes framework for learning controllers for unknown stochastic linear discrete-time systems, where the system parameters are drawn from a fixed but unknown distribution. We derive a data-dependent high-probability bound on the performance of any learned (stochastic) controller, and propose novel efficient learning algorithms with theoretical guarantees, which can be implemented for both finite and infinite controller spaces. Compared to prior work, our bound holds for unbounded quadratic cost. In the special case where LQG is optimal, our numerical results suggest that the learned controllers achieve comparable performance to LQG. 
\end{abstract}

\begin{keyword}
PAC-Bayes approach \sep
learning-based control \sep
unknown linear discrete-time systems \sep
controller distribution optimization \sep
high-probability performance bounds
\end{keyword}

\end{frontmatter}

\section{Introduction}
\label{sec:intro}
Controlling linear discrete-time systems is a fundamental engineering problem. 
For known linear dynamics with quadratic costs and Gaussian noise, classical LQG control provides an optimal solution~\citep{lqgcases}. 
For unknown dynamics, learning-based control methods are required.
A classical approach is adaptive LQG, which combines online parameter estimation with stochastic control design and achieves asymptotically optimal performance for time-invariant systems without requiring an accurate prior model~\citep{adaptivelqg,adaptivelqg2,adaptivelqg3}. Related research on model uncertainty includes backstepping control~\citep{Backstepping}, robust learning-based methods~\citep{robust}, dual control strategies~\citep{dual}, and reinforcement learning-based algorithms~\citep{rl}.

In contrast to the classical assumption of fixed but unknown parameters, this work considers system dynamics modeled as random variables drawn from unknown but fixed distributions. 
This setting arises, for instance, when multiple systems are produced under the same manufacturing process, where each instance has parameter variations due to process uncertainty while following a common underlying distribution.
The PAC-Bayes framework offers opportunities to achieve this goal by providing performance guarantees for learned controllers through combining a data-independent distribution with empirical data.
It gives a theoretical guarantee for the performance of the learned policy on unseen data through a high-probability upper bound on the expected cost~\citep{c1}.

\citep{c4,c5} have shown that the PAC--Bayes framework can be successfully integrated into control design for known or well-structured systems. 
In particular, \citep{c4} used PAC-Bayes to optimize robotic control strategies with improved generalization across unseen environmental conditions.
This PAC-Bayes-based method has subsequently been explored in co-adaptive human–robot interaction~\citep{paclearning1} and, more recently, in combination with conformal prediction for formal verification and safe control of learning-enabled autonomous systems~\citep{paclearning2}.
For nonlinear systems subject to unknown noise disturbances, \citep{c5} developed a PAC-Bayes-based method to maintain reliable control performance, in which a $\tanh$ transformation is used to map unbounded costs into a bounded range so that the classical PAC-Bayes bound can be applied.

The PAC--Bayes framework still has two gaps in control applications. 
The first gap concerns handling unbounded quadratic costs through PAC-Bayes bounds that directly accommodate this cost structure and preserve the stronger penalization of poor controllers.
The second gap concerns unknown systems, in which the data-only nature of PAC-Bayes can be used to learn a control policy directly from data without an explicit model.

\subsection{Contributions}
\begin{itemize}   
    \item PAC-Bayes bounds are derived for controller learning over finite and infinite controller spaces. The bounds apply directly to the original unscaled quadratic cost, without boundedness or normalization.
   
    \item A PAC-Bayes-based controller learning algorithm is proposed for arbitrary unknown distributions over the system matrices and process noise. Under additional structural and boundedness assumptions, it yields computable high-probability upper bounds on the expected quadratic cost.
    
    \item Numerical experiments on unknown linear discrete-time systems show effective learned policies, with expected costs comparable to an optimal LQG benchmark computed using the true system matrices.
\end{itemize}

\subsection{Notation}
Let $\mathcal{D}^m$ be the $m$-fold Cartesian product of $\mathcal{D}$, and let $KL(P\|P_0):=\mathbb{E}_{x\sim P}\ln\frac{P(x)}{P_0(x)}$. 
For $x\in\mathbb{R}^d$, $\|x\|_2$ denotes the Euclidean norm. 
For $A\in\mathbb{R}^{m\times n}$, $\|A\|$ and $\|A\|_F$ denote the spectral and Frobenius norms. 
For square $A$, $r_A:=\max_i|\lambda_i|$, where $\lambda_i$ are its eigenvalues. 
When $A$ is symmetric or positive definite, $\|A\|=r_A$, so $\|A\|$ coincides with the spectral radius.

\section{Problem Formulation}
\label{sec:problem}

\subsection{System model and quadratic cost}
Consider the time-invariant discrete-time linear system:
\begin{equation}\label{eq:Time-invariant Linear Discrete-time System}
    x{(t+1)} = A x{(t)} + B u{(t)} + w{(t)}, \quad t= 0,\cdots,T-1, 
\end{equation}
where $x{(t)} \in \mathbb{R}^{d_x}$ is the state vector. The initial state $x{(0)}$ is assumed to be $\mathbf{0}$ in this paper without loss of generality. Here, $u(t)\in\mathbb{R}^{d_u}$ denotes the control input, and $T$ is the fixed time horizon. The system parameters $A\in \mathbb{R}^{{d_x}\times{d_x}}$ and $B\in \mathbb{R}^{{d_x}\times{d_u}}$ are random variables drawn from unknown but fixed distributions $\mathcal{D}_A$, $\mathcal{D}_B$, respectively, and their realizations are not available for observation. The sequence $\{w{(t)}\}_{t=0}^{T-1}$ with $w(t)\in \mathbb{R}^{d_x}$ represents process noise, drawn i.i.d. from the unknown but fixed distribution $\mathcal{D}_w$. In Section~\ref{sec:thm}, we introduce additional boundedness and tail assumptions on $(\mathcal D_A,\mathcal D_B,\mathcal D_w)$ that are needed for theoretical analysis. The state-feedback control law is given by:
\begin{equation}\label{eq:Time-invariant control law}
    u{(t)} = K x{(t)},
\end{equation}
where $K\in\mathcal{K}\subseteq\mathbb{R}^{d_u \times d_x }$ is the control matrix that determines the control input based on the system state, and $\mathcal{K}$ denotes the controller space. 

Applying a specific controller $K$ to a randomly chosen system produces a random trajectory $x{(1)},\cdots,x{(T)}$, which we denote by the random variable $X_K = (x{(1)},\cdots,x{(T)})\in \mathbb{R}^{{d_x}\times T}$. Specifically, $X_K$ is obtained by sampling $A \sim \mathcal{D}_A$, $B \sim \mathcal{D}_B$, and a noise sequence $w(t) \overset{\text{i.i.d.}}{\sim} \mathcal{D}_w$ for $t=0,\dots,T-1$, and evolving the system~\eqref{eq:Time-invariant Linear Discrete-time System}-\eqref{eq:Time-invariant control law}.
This procedure induces a probability measure over trajectories, denoted by $P_{X_K}$.

In a typical LQG framework, the cost is defined as
\begin{align}\label{eq:J_KAB}
    J(K;A,B)
    := & \mathbb{E}_{W}\Bigg[
        \sum_{t=0}^{T-1} \bigl( x(t)^\top Q x(t) + u(t)^\top R u(t) \bigr)\notag\\        
        &+ x(T)^\top Q x(T)
    \Bigg],
\end{align}
where the expectation is over the noise sequence $W :=(w{(0)},\cdots,w{(T-1)})$, and $Q \succeq 0$ and $R \succ 0$ weight the state and control input. 
Since $A$ and $B$ are random, we also take expectations over them, that is,
$
    C(K) := \mathbb{E}_{A \in \mathcal{D}_A,\; B \in \mathcal{D}_B} J(K;A,B),
$
which can be rewritten as
\begin{equation}\label{eq:C_over_XK}
    C(K) = \mathbb{E}_{X_K \sim P_{X_K}} \bigl[ C_\text{q}(K,X_K) \bigr],
\end{equation}
where we define 
\begin{align}\label{eq: quadratic cost}
    C_\text{q}(K,X_K) &:= \sum_{t=0}^{T-1}  \left[ x{(t)}^\top Q x{(t)} + (Kx{(t)})^\top R (Kx{(t)}) \right]\notag\\& + x{(T)}^\top Q x{(T)}
\end{align} 
with the understanding that the randomness of $X_K$ is induced by random $A$, $B$, and the noise sequence as described above.

In this work, we consider a stochastic learning algorithm that maps the
training dataset to a learned controller $K$ through a
data-dependent randomized procedure. Equivalently, the algorithm first outputs a data-dependent distribution $P$ over the controller space $\mathcal K$, from which the learned controller is selected as $K\sim P$.
Therefore, the ultimate cost is given by taking the expected value of $C(K)$ with respect to $K \sim P$:
\begin{equation}\label{eq:ultimate_cost}
    \bar C(P) := \mathbb{E}_{K \sim P} \bigl[ C(K) \bigr].
\end{equation}
In the PAC--Bayes literature, $\bar C(P)$ is termed the \emph{Gibbs expected cost}~\citep{c3}.

\subsection{Preliminaries on PAC-Bayes approach}

In the PAC-Bayes framework, the learning object is a probability distribution $P$ over a hypothesis space. 
In this paper, the hypothesis space is the controller space $\mathcal K$, and each hypothesis is a feedback gain $K \in \mathcal K$. 
To apply the PAC-Bayes approach, we first choose a fixed, data-independent reference distribution $P_0$ on $\mathcal K$, and then learn a data-dependent distribution $P$ from data. 
In the PAC-Bayes literature, $P_0$ and $P$ are commonly referred to as the \textit{prior distribution} and \textit{posterior distribution}, respectively~\citep{c1}. 
We restrict attention to distributions $P$ for which $KL(P\|P_0) < \infty$, which in particular requires $P$ to be absolutely continuous with respect to $P_0$.
PAC-Bayes theory provides high-probability performance guarantees by relating the expected cost under $P$ to the empirical cost observed from data, together with a complexity term involving $KL(P\|P_0)$.

\subsection{Control objectives}
We consider two cases in our study:
\begin{itemize}
    \item \textbf{Case 1 (Finite controller space).}  
    A finite set of controllers is given by  
    $\mathcal{K} = \{K_1, K_2, \ldots, K_L\}.$

    \item \textbf{Case 2 (Infinite controller space).} 
    An infinite controller set is considered, satisfying  
    $\mathcal{K} \subseteq \mathbb{R}^{d_u \times d_x}.$
\end{itemize}

Our objective is to learn a controller from trajectory data generated by unknown systems.
We consider a stochastic learning algorithm that maps the training dataset $\mathcal S$ to a learned controller $K$.
Within the PAC--Bayes framework, this procedure is characterized by a data-dependent distribution $P$ over $\mathcal K$.

A natural objective is to choose $P$ to minimize the expected cost:
$
\min_{P} \, \bar C(P).
$
However, since $\mathcal D_A$, $\mathcal D_B$, and $\mathcal D_w$ are unknown, $\bar C(P)$ is not computable and therefore cannot be optimized directly.

The PAC--Bayes approach serves two main purposes.
\begin{itemize}
    \item \textbf{Objective 1 (theoretical guarantees):}  
    To provide a computable and data-dependent upper bound on $\bar{C}(P)$ that certifies the performance of any given control strategy, as discussed in Section~\ref{sec:thm}.

    \item \textbf{Objective 2 (controller learning algorithm):}
    Based on the derived PAC--Bayes bound, we aim to develop a data-driven learning objective which, given only a finite set of observed trajectories, learns a distribution $P$ over controllers, as discussed in Section~\ref{sec:algorithm}.
\end{itemize}

\section{Two PAC-Bayes Bounds for Controlling Unknown Systems}
\label{sec:thm}

In this section, we address Objective~1 by first introducing the definition of sub-Gaussian random variables.
\begin{definition}[Sub-Gaussian random variable with constant $\sigma^2$]~\citep{subgaussian}\label{def:sub-Gaussian}
A real-valued random variable $V$ is called $\sigma^2$-sub-Gaussian if there exists a constant $\sigma > 0$ such that for all $\lambda \in \mathbb{R}$,
\begin{equation*}
    \mathbb{E}\bigl(e^{\lambda [V-\mathbb{E}(V)]}\bigr)
    \le e^{\sigma ^2 \lambda^2/2}.
\end{equation*}
\end{definition}

\begin{definition}[Sub-Gaussian random vector with constant $\sigma^2$]\label{def:sub-Gaussian vector}
Let $\sigma > 0$. A random vector $V = (V_{1}, \ldots, V_{d}) \in \mathbb{R}^d$
is called $\sigma^2$-sub-Gaussian if for any unit vector
$U \in \mathbb{R}^d$ with $\|U\|_2 = 1$, the random variable $U^\top V$
is $\sigma^2$-sub-Gaussian.
\end{definition}

\vspace*{-1.5em}
\begin{strip}
\begin{equation}
    \label{eq:Bcost}
B_{\text{cost}}
:= \operatorname*{ess\,sup}_{K\in\mathcal K}\;
\sqrt{%
\sigma_w^{4} d_x \,\rho_Z(K)^2
\left(
128 \sum_{i=0}^{T} \rho_M(K)^{\,4(T-i)} \right.
\left. +\; 64 \sum_{0\le i<j\le T} 
\rho_M(K)^{\,4T-2(i+j)} \right)}.
\end{equation}
\end{strip}
\vspace*{-2em}
\noindent
\subsection{Data-generating process and required assumptions}

\textbf{Data-generating process.} For a fixed controller $K$, we assume that $n$ independent realizations $\{(A^{(i)},B^{(i)})\}_{i=1}^n$ are obtained by sampling from unknown distributions $\mathcal{D}_A$ and $\mathcal{D}_B$. Similarly, we assume the noise terms $\{w^{(i)}{(t)}\}^{i=1,\cdots,n}_{t=0,\cdots,T-1}$ are i.i.d. with fixed but unknown distribution $\mathcal D_w$. Then $n$ samples of the trajectories $\{X_K^{(i)}\}_{i=1}^n$ can be generated, where each matrix $X_K^{(i)} = (x^{(i)}{(1)},\cdots,x^{(i)}{(T)})\in \mathbb{R}^{{d_x}\times T}$ represents one trajectory. We define the empirical quadratic cost for a controller $K$ as
\begin{equation}\label{eq:
empirical cost}
    \hat{C}(K) := \frac{1}{n}\sum_{i=1}^n C_q (K,X_K^{(i)}).
\end{equation}

\begin{assumptions}\label{assumption:sub-Gaussian-TI}
There exist known constants $a_1,a_2,b_1,b_2 \in \mathbb{R}$ and $\sigma_w^2>0$, such that:
\begin{itemize}
    \item $A \sim \mathcal  D_A$, $B \sim \mathcal  D_B$ and
    \[
        \mathbb{P}\bigl(a_1 \le A_{ij} \le a_2\bigr) = 1,\quad
        \mathbb{P}\bigl(b_1 \le B_{ij} \le b_2\bigr) = 1,
        \quad \forall\, i,j.
    \]
    \item $W :=(w{(0)},\cdots,w{(T-1)})$ is an i.i.d.\ sequence with $w(t) \sim \mathcal D_w$, where
    $
        w(t) = (w_1(t),\ldots,w_{d_x}(t))^\top,
    $
    and each $w_k(t)$ is $\sigma_w^2$-sub-Gaussian.
\end{itemize}
\end{assumptions}

\subsection{PAC-Bayes bound for a finite controller space}
For a finite controller space $\mathcal K=\{K_1,\cdots,K_L\}$, the dataset $\mathcal{S} =\{X_{K_j}^{(i)}\}^{i=1,\cdots ,n}_{j=1,\cdots,L}$ is generated according to the data-generating process described above, and when Assumptions~\ref{assumption:sub-Gaussian-TI} hold, each trajectory realization ${X_{K_j}^{(i)}}$ is i.i.d. Let $P$ be a vector representing a probability mass function on $\mathcal{K}$ with coordinates $P_j=P(K_j)$. When the distribution depends on the data $\mathcal S$ (i.e., it is a conditional distribution), it can be interpreted as a learning algorithm which stochastically maps data to a controller. That is, given the data $\mathcal S$ as input, the learning algorithm specifies a random controller according to the distribution $P$. In Section~\ref{sec:algorithm} we propose algorithms for obtaining $P$.

Define the \textit{Gibbs empirical cost} with probability distribution $P$ as
$\tilde{C}(P):=\mathbb{E}_{K\sim P}[\hat C(K)]=\sum_{j=1}^L P_j\,\hat C(K_j),$
where $\hat C(K)$ is defined in Eq.~\eqref{eq:
empirical cost}.  In this case, the expression can be explicitly written as 
$\tilde{C}(P)=\sum_{j=1}^L P_j\,[\frac{1}{n}\sum_{i=1}^n C_q (K_j,X_{K_j}^{(i)})].$
We now present the following theorem.

\begin{theorem}[PAC-Bayes bound for finite controller space]
\label{theorem: PAC-Bayes Bound for Controllers}
$\qquad$\\ Consider the system \eqref{eq:Time-invariant Linear Discrete-time System}--\eqref{eq:Time-invariant control law} under the stated Assumptions~\ref{assumption:sub-Gaussian-TI}.
Given a finite controller space $\mathcal K=\{K_1,\dots,K_L\}$, generate the dataset $\mathcal{S} =\{X_{K_j}^{(i)}\}^{i=1,\cdots ,n}_{j=1,\cdots,L}$ according to the data-generating process. Let $P_0$ be any data-independent distribution over $\mathcal K$, let $\Omega\subset(0,+\infty)$ be a data-independent finite set and define
\begin{equation}\label{eq: gamma}
\Gamma=\Omega\cap\Bigl(0,\ \operatorname*{ess\,inf}_{K\in\mathcal K}\tfrac{1}{4\sigma_w^2\,\rho_Z(K)\,\rho_M(K)^{2T}}\Bigr),
\end{equation}
where $\rho_Z(K):=\|Q\|_F+\|K\|^2\|R\|_F$ and $\rho_M(K):=\rho_A+\rho_B\|K\|$, with $ \rho_A = \max \{|a_1|,|a_2|\}$, $\rho_B = \max \{|b_1|,|b_2|\}$, and $a_1,a_2,b_1,b_2$ are the constants defined in Assumptions~\ref{assumption:sub-Gaussian-TI}.
Then, $\forall \delta\in(0,1)$,
\begin{align}
&\mathbb{P}\!\left(\forall P,
\forall \lambda\in\Gamma,\;\bar
C(P)\le \tilde C(P)+\frac{\lambda B_{\text{cost}}^{2}}{8n}
+ \frac{KL(P\Vert P_0)+\ln \frac{card(\Gamma)}{\delta}}{\lambda}
\right)\notag \\ 
&\ge 1-\delta,
\label{eq: PAC-Bayes Bound for Finite Controllers}
\end{align}
where $B_{cost}$ is defined by Eq.~\eqref{eq:Bcost}.
\end{theorem}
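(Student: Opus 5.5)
The proof will combine the Donsker--Varadhan change-of-measure inequality with a moment-generating-function bound for the quadratic cost of a single trajectory, and then take a union bound over the finite grid $\Gamma$.

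\textbf{Step 1: change of measure.} Fix $\lambda\in\Gamma$. Apply the Donsker--Varadhan variational formula to the function $K\mapsto \lambda\,(C(K)-\hat C(K))$. This gives, for every distribution $P$ on $\mathcal K$,
\[
\lambda\bigl(\bar C(P)-\tilde C(P)\bigr)\le KL(P\Vert P_0)+\ln \mathbb{E}_{K\sim P_0}\, e^{\lambda (C(K)-\hat C(K))}.
\]
Because $P_0$ does not depend on the data, Markov's inequality shows that with probability at least $1-\delta/\mathrm{card}(\Gamma)$,
\[
\mathbb{E}_{K\sim P_0} e^{\lambda(C(K)-\hat C(K))}\le \frac{\mathrm{card}(\Gamma)}{\delta}\,\mathbb{E}_{\mathcal S}\,\mathbb{E}_{K\sim P_0} e^{\lambda(C(K)-\hat C(K))}.
\]
We then swap the expectations by Fubini. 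A union bound over the finite set $\Gamma$ makes the bound hold simultaneously for all $\lambda\in\Gamma$ and all $P$. It therefore remains to show that for each fixed $K$,
\[
\mathbb{E}_{\mathcal S}\, e^{\lambda(C(K)-\hat C(K))}\le e^{\lambda^2B_{\text{cost}}^2/(8n)}.
\]
Since $\hat C(K)$ is an average of $n$ i.i.d.\ copies of $C_q(K,X_K)$, this reduces to the one-sample bound
\[
\mathbb{E}\, e^{(\lambda/n)(C(K)-C_q(K,X_K))}\le e^{(\lambda/n)^2B_{\text{cost}}^2/8}.
\]

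\textbf{Step 2: writing the cost as a noise quadratic form.} With $x(0)=0$ we have $x(t)=\sum_{s<t}M^{t-1-s}w(s)$, where $M=A+BK$. Stacking the noise into $W$, this gives
\[
C_q(K,X_K)=W^\top \mathcal M^\top \mathcal Z\,\mathcal M W,
\]
where $\mathcal Z$ is block-diagonal with blocks $Q+K^\top RK$ (and $Q$ at time $T$), and $\mathcal M$ is block lower-triangular with blocks $M^{t-1-s}$. Under Assumptions~\ref{assumption:sub-Gaussian-TI}, we use $\|M\|\le\rho_M(K)$ and $\|Q+K^\top RK\|_F\le\rho_Z(K)$. (Strictly, the spectral-norm bound needs a dimension factor from the entrywise bounds; I would either absorb it or use the paper's convention for $\rho_A,\rho_B$.)

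\textbf{Step 3: MGF of the quadratic form.} Conditioning on $(A,B)$, I would use a Hanson--Wright-type bound, obtained by decoupling or by Gaussian comparison for sub-Gaussian vectors. For $|\mu|$ small enough, it gives
\[
\mathbb{E}\, e^{\mu(W^\top GW-\mathbb{E}W^\top GW)}\le e^{c\,\mu^2\sigma_w^4\|G\|_F^2}.
\]
The admissible range $|\mu|\le 1/(4\sigma_w^2\|G\|)$ with $\|G\|\le\rho_Z\rho_M^{2T}$ is what produces the upper limit in the definition of $\Gamma$; since $\mu=\lambda/n\le\lambda$, membership in $\Gamma$ suffices. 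Expanding $\|G\|_F^2$ over block pairs $(i,j)$, and bounding $\|M^{T-i}\|^2\|M^{T-j}\|^2$ by $\rho_M^{4T-2(i+j)}$, yields exactly the diagonal sum $128\sum_i\rho_M^{4(T-i)}$ and the cross sum $64\sum_{i<j}$ in Eq.~\eqref{eq:Bcost}, with the factor $d_x$ coming from the trace.

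\textbf{Step 4: randomness in $(A,B)$.} The remaining fluctuation, from $J(K;A,B)$ versus $C(K)$ (the conditional mean varying with the random system matrices), would be handled by Hoeffding's lemma, since $J$ is bounded almost surely by the entry bounds. Combining the two pieces gives the sub-Gaussian constant $B_{\text{cost}}^2/4$, hence the $1/8$ factor in the bound. Taking the essential supremum over $K$ makes the constant uniform under $P_0$.

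The main obstacle is Step 3: obtaining a clean MGF bound for a sub-Gaussian quadratic form whose constants match Eq.~\eqref{eq:Bcost} and whose validity range matches Eq.~\eqref{eq: gamma}. Combining this cleanly with the $(A,B)$-randomness in Step 4 is the other delicate point; I would first try to show that the entire constant comes from the conditional quadratic form and then bound the mean part crudely.
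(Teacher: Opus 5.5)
\textbf{There is a genuine gap: Steps~3--4 cannot deliver the stated constants, and in fact the statement with this $B_{\text{cost}}$ and $\Gamma$ is false.}

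Your Step~1 is sound and matches the paper's skeleton. That skeleton is: change of measure, Markov at level $\delta/\mathrm{card}(\Gamma)$, swapping $\mathbb{E}_{\mathcal S}$ and $\mathbb{E}_{K\sim P_0}$ because $P_0$ is data-independent, factorising over the $n$ i.i.d.\ trajectories, and a union bound over $\Gamma$.

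The gap is your claim in Steps~3--4 that the remaining pieces reproduce \emph{exactly} $B_{\text{cost}}$ and the range in \eqref{eq: gamma}. Hanson--Wright carries an unspecified absolute constant. It also needs independent, centred coordinates inside each $w(t)$, which Assumptions~\ref{assumption:sub-Gaussian-TI} do not provide. More seriously, the blocks of $G=\mathcal M^\top\mathcal Z\mathcal M$ are sums over time, $G_{ij}=\sum_{t=\max(i,j)+1}^{T}(M^{t-1-i})^\top Z_t M^{t-1-j}$ with $Z_t\in\{Z,Q\}$, not single products. So neither $\|G_{ij}\|\le\rho_Z\rho_M^{2T-i-j}$ nor $\|G\|\le\rho_Z\rho_M^{2T}$ holds.

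A concrete counterexample: take $d_x=d_u=1$, $a_1=a_2=b_1=b_2=0$ (or let $\rho_M\downarrow 0$ to avoid $0^0$), $\mathcal K=\{0\}$, $Q=R=1$, and $w(t)\sim\mathcal N(0,\sigma_w^2)$.
\begin{itemize}
\item Then $C_q=\sum_{s=0}^{T-1}w(s)^2$ has variance $2T\sigma_w^4$.
\item But $B_{\text{cost}}^2=128\sigma_w^4$, since only the $i=T$ term survives, and the upper end of \eqref{eq: gamma} is $+\infty$.
\item Your one-sample inequality therefore fails for small $\mu$ once $T>16$.
\item With $\Omega=\{\lambda_0\}$ and $\lambda_0=\sqrt{n\ln(1/\delta)}/(4\sigma_w^2)$, the event in \eqref{eq: PAC-Bayes Bound for Finite Controllers} reads $C(0)-\hat C(0)\le 8\sigma_w^2\sqrt{\ln(1/\delta)/n}$. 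Yet $C(0)-\hat C(0)$ has standard deviation $\sigma_w^2\sqrt{2T/n}$.
\item For $T=100$ and $\delta=0.05$, the event is violated with probability about $0.16>\delta$.
\end{itemize}
No choice of concentration inequality in Step~3 can rescue the stated $B_{\text{cost}}$ and $\Gamma$.

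Your Step~4 correctly spots something the paper's proof skips. The paper's MGF computation treats $M=A+BK$ as fixed, so it only controls $C_q-J(K;A,B)$. However, Hoeffding's lemma adds a \emph{separate} term $\mu^2(\sup J-\inf J)^2/8$ to the exponent, and this term is not dominated by $B_{\text{cost}}^2$. For example, take $d_x=1$, $B=0$, $K=0$, $Q=1$, the same Gaussian noise, and $A\in\{0,1\}$ equiprobable. Then $\operatorname{Var}J(0;A,B)\approx\sigma_w^4T^4/16$, while $B_{\text{cost}}^2\approx 32\sigma_w^4T^2$. So ``combining the two pieces gives $B_{\text{cost}}^2/4$'' is wrong. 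Likewise, the dimension factor you flag in Step~2 cannot be absorbed into the stated $\rho_M$: the entrywise bounds only give $\|A\|\le d_x\rho_A$.

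The paper's proof does not close these gaps either. After the same Step~1, it bounds the $H_i$ and $H_{ij}$ terms separately with Honorio's squared-sub-Gaussian MGF bound. It then multiplies the MGFs of dependent terms and uses the same single-product bounds on $\|H_i\|$ and $\|H_{ij}\|$.

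Your architecture (Hanson--Wright on the stacked noise plus Hoeffding for $(A,B)$) is the sounder of the two if you:
\begin{itemize}
\item keep the dimension factors, giving a dimension-corrected $\rho_M$;
\item use $\|G\|\le\rho_Z\bigl(\sum_{s=0}^{T-1}\rho_M^{s}\bigr)^2$ and geometric sums in place of the single powers $\rho_M^k$.
\end{itemize}
It then proves the bound, but with a different and larger constant.

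A cleaner fix uses only $\lambda>0$ and $C_q\ge 0$. From $e^{-x}\le 1-x+x^2/2$ on $x\ge 0$ you get $\mathbb{E}e^{\mu(C(K)-C_q)}\le e^{\mu^2\mathbb{E}[C_q^2]/2}$ for every $\mu>0$. This handles the noise and $(A,B)$ together, with no restriction on $\lambda$. The price is replacing $B_{\text{cost}}^2$ by $4\sup_K\mathbb{E}[C_q(K,X_K)^2]$.
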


\begin{proof}
As discussed in Section~\ref{sec:problem}, the quadratic cost can be expressed as a function of $(K,A,B,W)$. 
In particular, the Gibbs expected quadratic cost satisfies
\[
\mathbb{E}_{K}\mathbb{E}_{X_K}\bigl[C_q(K,X_K)\bigr]
= \mathbb{E}_{K}\mathbb{E}_{A,B,W}\bigl[C_q\bigl(K, A,B,W\bigr)\bigr].
\]
First, the generalization gap under $P$ is defined as
\[
G:=\bar C(P)-\tilde C(P)=\sum_{j=1}^n P_j\big(C(K_j)-\hat C(K_j)\big).
\]
We follow the derivation steps in~\citep{c3}, where the Kullback–Leibler change of measure was introduced, to upper bound the generalization gap between the Gibbs expected and Gibbs empirical cost. For any $\lambda > 0$, it holds that:
\begin{equation}\label{eq:thm-1}
G\;\le\;\frac{1}{\lambda}\left\{
KL(P\Vert P_0)\;+\;\ln\!\left(\sum_{j=1}^n P_{0j}e^{\lambda\,[C(K_j)-\hat C(K_j)]}\right)
\right\}.
\end{equation}
In the subsequent proof, we write $\hat C(K_j)$ to emphasize that it is the empirical cost computed from the realizations $A$, $B$ and $W$. Consider that $A'$, $B'$ and $W'$ are the independent copies of the realizations, then define $\hat C'(K_j)$ as the empirical cost calculated by using the realizations $A'$, $B'$ and $W'$.
As in~\citep{c3}, we now focus on the term
\[
f(A,B,W):=\sum_{j=1}^n P_{0j}e^{\lambda\,[C(K_j)-\hat C(K_j)]}.
\]
By applying Markov’s inequality, for any $\delta\in(0,1)$, we obtain
\begin{equation*}\label{eq:markov}
\mathbb{P}\!\left(f(A,B,W)\le \frac{1}{\delta}\,\mathbb E_{A',B',W'}f(A',B',W')\right)\;\ge\;1-\delta.
\end{equation*}
Because $P_0$ is data-independent, we have
\begin{align*}
\mathbb E_{A',B',W'}f(A',B',W')
&=\mathbb E_{A',B',W'}\sum_{j=1}^nP_{0j}e^{\lambda[C(K_j)-\hat C'(K_j)]}\notag\\
&=\sum_{j=1}^n P_{0j}\,\mathbb E_{A',B',W'}e^{\lambda[C(K_j)-\hat C'(K_j)]}.
\label{eq:exchange}
\end{align*}
By the same argument in~\citep{c3}, we obtain that with probability at least $1-\delta$, Eq.~\eqref{eq:thm-1} is less than
\begin{equation}\label{eq:thm-2}
 \frac{1}{\lambda}\left\{KL(P\Vert P_0)+
\ln\!\left(\frac{1}{\delta}\sum_{j=1}^n P_{0j}\,\mathbb E_{A',B',W'}e^{\lambda[C(K_j)-\hat C'(K_j)]}\right)\right\}.
\end{equation}
The details of the above steps can be found in~\citep{c3}.
We can use the sub-Gaussian property to show that inequality  $\mathbb E_{A',B',W'} [e^{\lambda (C(K_j) - \hat{C}(K_j))}] \le  e^{\frac{\lambda^2 B_{cost}^2 (K_j)}{8m}}$ holds for $|\lambda| \le \tfrac{1}{4\sigma_w^2\,\rho_Z(K_j)\,\rho_M(K_j)^{2T}}$ (a detailed proof is given later). Then this further implies that Eq.\eqref{eq:thm-2} is bounded by
  \begin{align}\label{eq:thm-3}
& \frac{1}{\lambda}\left\{KL(P\Vert P_0)+
\ln\!\left(\frac{1}{\delta}\sum_{j=1}^n P_{0j}\,\exp\!\left(\frac{\lambda^2}{8m}\,B_{\text{cost}}^2(K_j)\right)\right)\right\}\notag\\
\le\ &\frac{1}{\lambda}\left\{KL(P\Vert P_0)+
\ln\!\left(\frac{1}{\delta}\,\exp\!\Big(\tfrac{\lambda^2}{8m}\,B_{\text{cost}}^{2}\Big)\right)\right\}.
\end{align}
   Based on Theorem 2.4 in~\citep{c1}, we can show that the following quantity is an upper bound on ~\eqref{eq:thm-3}, which holds uniformly for all $\lambda$ in $\Gamma$, defined in~\eqref{eq:thm-4}.
    \begin{align}\label{eq:thm-4}
 & \frac{1}{\lambda} [KL(P \parallel P_0) + ln \frac{card(\Gamma)}{\delta} + \frac{\lambda^2 B_{cost}^2}{8m}] \\
    &= \frac{\lambda B_{cost}^2}{8m} + \frac{KL(P \parallel P_0) + \ln \frac{card(\Gamma)}{\delta}}{\lambda}.\notag
\end{align}
Thus, the inequality in Eq.~\eqref{eq: PAC-Bayes Bound for Finite Controllers} holds. 

We now present the main ideas for proving the inequality
\begin{equation}\label{eq: MGF Bcost}
    \mathbb{E}_{A',B',W'} \left[e^{\lambda (C(K) - \hat{C}(K))} \right] \le  e^{\frac{\lambda^2 B_{\text{cost}}^2}{8m}},
\end{equation}
for $|\lambda| \le \operatorname*{ess\,inf}_{K \in \mathcal{K}} \frac{1}{4 \sigma_w^2 \cdot \rho_Z(K) \cdot \rho_M(K)^{2T}} $.
Consider the system \eqref{eq:Time-invariant Linear Discrete-time System}--\eqref{eq:Time-invariant control law} under the stated Assumptions~\ref{assumption:sub-Gaussian-TI}.
For notational simplicity, we write $\mathbb{E}$ in place of $\mathbb{E}_{A',B',W'}$ throughout the following proof.
For $t=0,\dots,T-1$, we define $M := A + B K$, so that
\begin{equation*}
x(t+1) = M x(t) + w(t).
\end{equation*}
For a fixed realization of $(A,B,W)$ (equivalently, of the trajectory $X_K$), the quadratic cost is
\begin{equation*}
C_\text{q}(K,X_K) = \sum_{t=0}^{T-1} x(t)^\top Z x(t) + x(T)^\top Q x(T),
\end{equation*}
where
\[
Z := Q + K^\top R K.
\]

We first expand $x(t)$ in terms of the noise variables.
From the recursion $x(t+1) = M x(t) + w(t)$ and $x(0)=0$, we obtain
\begin{equation*}
x(t) = \sum_{k=0}^{t-1} M^{\,t-1-k} w(k), \qquad t=1,\dots,T.
\end{equation*}
Substituting this into the cost expression and grouping terms by the noise indices yields
\begin{equation*}
C_\text{q}(K,X_K)  = \sum_{t=0}^T \sum_{i,j=0}^{t-1} w(i)^\top D_{t,ij} w(j),
\end{equation*}
where
\begin{equation*}
D_{t,ij} := M^{\,t-1-i\,\top} Z M^{\,t-1-j}.
\end{equation*}
By grouping terms over time, we define
\begin{equation*}
H_i = \sum_{t=i+1}^T D_{t,ii}, \quad
H_{ij} = \sum_{t=\max(i,j)+1}^T D_{t,ij},\quad i\neq j,
\end{equation*}
and rewrite the cost as
\begin{align*}\label{eq:Cq-H-expansion}
C_\text{q}(K,X_K) &= \sum_{i=0}^{T-1} w(i)^\top H_i w(i)\\
&+ \sum_{0 \le i < j \le T-1} \bigl(w(i)^\top H_{ij} w(j) + w(j)^\top H_{ij}^\top w(i)\bigr).
\end{align*}

Using the spectral decomposition $H_i = U \Lambda U^\top$ and defining $\tilde{w}(i) := U^\top w(i)$, we have
\begin{equation*}
w(i)^\top H_i w(i) = \tilde{w}(i)^\top \Lambda \tilde{w}(i),
\end{equation*}
where $\Lambda = \mathrm{diag}(\mu_1, \dots, \mu_{d_x})$ is a diagonal matrix of eigenvalues, where $\mu_i$ is the $i^{th}$ eigenvalue of $H_i$. Since $\Lambda$ is diagonal and positive semidefinite, we can upper bound the quadratic form as
\begin{equation*}
\tilde{w}(i)^\top \Lambda \tilde{w}(i) = \sum_{j=1}^{d_x} \mu_j \tilde{w}^2_j(i) \le \|\Lambda\| \cdot \sum_{j=1}^{d_x} \tilde{w}^2_j(i) = \|\Lambda\| \cdot \|\tilde{w}(i)\|_2^2,
\end{equation*}
due to $\|\Lambda\| = \max_j \mu_j$. The inequality follows from the fact that each $\mu_j \le \|\Lambda\|$.

Since the noise vector $w(i) $ has independent $\sigma_w^2$-sub-Gaussian entries, and $U \in \mathbb{R}^{d_x \times d_x}$ is an orthogonal matrix, for any $v \in \mathbb{R}^{d_x }$ with $\|v\|_2 = 1$,  by Def.~\ref{def:sub-Gaussian} and Def.~\ref{def:sub-Gaussian vector}, we have
\begin{equation*}
\mathbb{E} \left[ e^{\lambda v^\top \tilde{w}(i)} \right]
= \mathbb{E} \left[ e^{\lambda (Uv)^\top w(i)} \right]
\le \exp\left( \frac{\lambda^2 \sigma_w^2}{2} \right),
\end{equation*}
where we used that $Uv$ is a unit vector and that linear combinations of independent sub-Gaussian variables remain sub-Gaussian. Hence, $\tilde{w}(i)$ is a $\sigma_w^2$-sub-Gaussian vector.

We note that the definition of a sub-Gaussian vector guarantees sub-Gaussian concentration along any unit direction $v \in \mathbb{R}^{d_x}$. In particular, taking $v = e_j$, the $j$-th standard basis vector, selects the $j$-th coordinate of $\tilde{w}(i)$, i.e., $\tilde{w}_j(i) = e_j^\top \tilde{w}(i)$. Since $\|e_j\|_2 = 1$, by Def.~\ref{def:sub-Gaussian}, applying the inequality above with $v = e_j$ gives:
\begin{equation*}
\mathbb{E} \left[ e^{\lambda \tilde{w}_j(i)} \right] = \mathbb{E} \left[ e^{\lambda e_j^\top \tilde{w}(i)} \right] \le \exp\left( \frac{\lambda^2 \sigma_w^2}{2} \right),
\end{equation*}
which shows that each entry $\tilde{w}_j(i)$ is a $\sigma_w^2$-sub-Gaussian random variable.

Since each $\tilde{w}_j(i)$ is a sub-Gaussian variable with parameter $\sigma_w^2$, we apply the result from \citep[Appendix B]{honorio}, which gives:
\begin{equation*}
\mathbb{E}\left[e^{\lambda (\tilde{w}_j(i)^2 - \mathbb{E}[\tilde{w}_j(i)^2])} \right]
\le \exp(16 \lambda^2 \sigma_w^4).
\end{equation*}
Then we have
\begin{equation*}
\tilde{w}(i)^\top \Lambda \tilde{w}(i) = \sum_{j=1}^{d_x} \mu_j \tilde{w}_j(i)^2,
\end{equation*}
and we can upper bound the centered sum as
\begin{equation*}
\sum_{j=1}^{d_x} \mu_j (\tilde{w}_j(i)^2 - \mathbb{E}[\tilde{w}_j(i)^2])  
\le \|\Lambda\| \cdot \sum_{j=1}^{d_x} (\tilde{w}_j(i)^2 - \mathbb{E}[\tilde{w}_j(i)^2]),
\end{equation*}due to $\|\Lambda\| = \max_j|\mu_j|$.
Applying the bound to each $\tilde{w}_j(i)$ and combining them, we obtain
\begin{align*}
&\mathbb{E} \left[ e^{\lambda (\tilde{w}(i)^\top \Lambda \tilde{w}(i) - \mathbb{E}[\tilde{w}(i)^\top \Lambda \tilde{w}(i)]) } \right] = \mathbb{E} \left[ e^{\lambda ( w(i)^\top H_i w(i) - \mathbb{E}[w(i)^\top H_i w(i)]) } \right]  \\
&\le \exp\left( 16 \lambda^2 \|\Lambda\|^2 \sigma_w^4 d_x \right),
\end{align*}
for all $\lambda$ such that $|\lambda| \le \frac{1}{4 \sigma_w^2 \|\Lambda\|}$, by the result from \citep[Appendix B]{honorio}.
Hence, we have
\begin{align*}
\mathbb{E} \left[ e^{\lambda ( w(i)^\top H_i w(i) - \mathbb{E}[w(i)^\top H_i w(i)]) } \right]
\le \exp\left( 16 \lambda^2 \|H_i\|^2 \sigma_w^4 d_x \right),
\end{align*}
for all $\lambda$ such that $|\lambda| \le \frac{1}{4 \sigma_w^2 \|H_i\|}$.

We now provide a detailed derivation for bounding the moment generating function (MGF) of the cross term $w(i)^\top H_{ij} w(j)$, where $w(i)$ and $w(j)$ are independent random vectors with sub-Gaussian entries and $i \neq j$. Our goal is to upper bound the centered MGF:
\begin{equation*}
\mathbb{E} \left[ e^{\lambda ( w(i)^\top H_{ij} w(j) - \mathbb{E}[w(i)^\top H_{ij} w(j)]) } \right].
\end{equation*}

We show how to express the bilinear form $w(i)^\top H_{ij} w(j)$ as a quadratic form, which allows us to apply the result from \citep[Appendix B]{honorio}.

Let $\zeta_{ij} \in \mathbb{R}^{2d_x}$ be the concatenated vector
\begin{equation*}
\zeta_{ij} := \begin{bmatrix} w(i) \\ w(j) \end{bmatrix},
\end{equation*}
and define the symmetric block matrix $\Psi_{ij} \in \mathbb{R}^{2d_x \times 2d_x}$ as
\begin{equation*}
\Psi_{ij} := \begin{bmatrix}
0 & H_{ij} \\
H_{ij}^\top & 0
\end{bmatrix}.
\end{equation*}
We compute the quadratic form $\zeta_{ij}^\top \Psi_{ij} \zeta_{ij}$:
\begin{align*}
\zeta_{ij}^\top \Psi_{ij} \zeta_{ij} 
&= 
\begin{bmatrix} w(i)^\top & w(j)^\top \end{bmatrix}
\begin{bmatrix}
0 & H_{ij} \\
H_{ij}^\top & 0
\end{bmatrix}
\begin{bmatrix} w(i) \\ w(j) \end{bmatrix} \\
&= w(i)^\top H_{ij} w(j) + w(j)^\top H_{ij}^\top w(i) \\
&= 2 w(i)^\top H_{ij} w(j),
\end{align*}
where the last step uses the fact that both terms are scalars and equal.

Thus, we have:
\begin{equation*}
w(i)^\top H_{ij} w(j) = \frac{1}{2} \zeta_{ij}^\top \Psi_{ij} \zeta_{ij}.
\end{equation*}
This shows that the centered bilinear form can be written as a centered quadratic form over the augmented vector $\zeta_{ij}$.
\begin{equation*}
w(i)^\top H_{ij} w(j) - \mathbb{E}[w(i)^\top H_{ij} w(j)] = \frac{1}{2} \left( \zeta_{ij}^\top \Psi_{ij} \zeta_{ij} - \mathbb{E}[\zeta_{ij}^\top \Psi_{ij} \zeta_{ij}] \right).
\end{equation*}

Then we have
\begin{equation*}
\mathbb{E} \left[ e^{\lambda (w(i)^\top H_{ij} w(j) - \mathbb{E}[w(i)^\top H_{ij} w(j)])} \right]
= \mathbb{E} \left[ e^{\frac{\lambda}{2} (\zeta_{ij}^\top \Psi_{ij} \zeta_{ij} - \mathbb{E}[\zeta_{ij}^\top \Psi_{ij} \zeta_{ij}])} \right].
\end{equation*}

We now perform a spectral decomposition of the symmetric matrix $\Psi_{ij}$: let
\begin{equation*}
\Psi_{ij} = \mathcal{U}_{ij} \Sigma_{ij} \mathcal{U}_{ij}^\top,
\end{equation*}
where $\Sigma_{ij} = \mathrm{diag}(\nu_1, \dots, \nu_{2d_x})$ is a diagonal matrix of eigenvalues and $\mathcal{U}_{ij} \in \mathbb{R}^{2d_x \times 2d_x}$ is orthogonal. Define the rotated noise vector
\begin{equation*}
\tilde{\zeta}_{ij} := \mathcal{U}_{ij}^\top \zeta_{ij} \in \mathbb{R}^{2d_x}.
\end{equation*}
Then we can rewrite the quadratic form as
\begin{equation*}
\zeta_{ij}^\top \Psi_{ij} \zeta_{ij} = \tilde{\zeta}_{ij}^\top \Sigma_{ij} \tilde{\zeta}_{ij} = \sum_{k=1}^{2d_x} \nu_k \tilde{\zeta}_{ij,k}^2.
\end{equation*}

Since $w(i)$ and $w(j)$ are independent and each has i.i.d. sub-Gaussian entries with parameter $\sigma_w^2$, the concatenated vector $\zeta_{ij}$ is also sub-Gaussian with parameter $\sigma_w^2$. Moreover, similar to the analysis of $\tilde{w}(i)$, since $\mathcal{U}_{ij}$ is orthogonal, $\tilde{\zeta}_{ij} = \mathcal{U}_{ij}^\top \zeta_{ij}$ is sub-Gaussian vector with parameter $\sigma_w^2$.

By the Def.~\ref{def:sub-Gaussian vector} for any $v \in \mathbb{R}^{2d_x}$ with $\|v\|_2 = 1$, we have
\begin{equation*}
\mathbb{E} \left[ e^{\lambda v^\top \tilde{\zeta}_{ij}} \right]
= \mathbb{E} \left[ e^{\lambda (\mathcal{U}_{ij} v)^\top \zeta_{ij}} \right]
\le \exp\left( \frac{\lambda^2 \sigma_w^2}{2} \right).
\end{equation*}
Hence, $\tilde{\zeta}_{ij}$ is a sub-Gaussian vector with parameter $\sigma_w^2$.

In particular, for each $k$, taking $v = e_k$ (the $k$-th standard basis vector in $\mathbb{R}^{2d_x}$), we obtain
\begin{equation*}
\mathbb{E} \left[ e^{\lambda \tilde{\zeta}_{ij,k}} \right]
= \mathbb{E} \left[ e^{\lambda e_k^\top \tilde{\zeta}_{ij}} \right]
\le \exp\left( \frac{\lambda^2 \sigma_w^2}{2} \right),
\end{equation*}
which shows that each coordinate $\tilde{\zeta}_{ij,k}$ is a sub-Gaussian random variable with parameter $\sigma_w^2$.

Since each $\tilde{\zeta}_{ij,k}$ is sub-Gaussian with parameter $\sigma_w^2$, we apply the scalar MGF bound from \citep[Appendix B]{honorio}, which gives:
\begin{equation*}
\mathbb{E} \left[ e^{\lambda (\tilde{\zeta}_{ij,k}^2 - \mathbb{E}[\tilde{\zeta}_{ij,k}^2])} \right]
\le \exp(16 \lambda^2 \sigma_w^4).
\end{equation*}

Then, the centered quadratic form becomes:
\begin{equation*}
\tilde{\zeta}_{ij}^\top \Sigma_{ij} \tilde{\zeta}_{ij} - \mathbb{E}[\tilde{\zeta}_{ij}^\top \Sigma_{ij} \tilde{\zeta}_{ij}] = \sum_{k=1}^{2d_x} \nu_k (\tilde{\zeta}_{ij,k}^2 - \mathbb{E}[\tilde{\zeta}_{ij,k}^2]),
\end{equation*}
which can be bounded as
\begin{equation*}
\sum_{k=1}^{2d_x} \nu_k (\tilde{\zeta}_{ij,k}^2 - \mathbb{E}[\tilde{\zeta}_{ij,k}^2])
\le \|\Sigma_{ij}\| \cdot \sum_{k=1}^{2d_x} (\tilde{\zeta}_{ij,k}^2 - \mathbb{E}[\tilde{\zeta}_{ij,k}^2]),
\end{equation*}
where $\|\Sigma_{ij}\| := \max_k |\nu_k|$.

To bound the moment generating function of the centered bilinear form, we recall that
\begin{equation*}
\mathbb{E} \left[ e^{\lambda ( w(i)^\top H_{ij} w(j) - \mathbb{E}[w(i)^\top H_{ij} w(j)] )} \right]
= \mathbb{E} \left[ e^{\frac{\lambda}{2} ( \zeta_{ij}^\top \Psi_{ij} \zeta_{ij} - \mathbb{E}[\zeta_{ij}^\top \Psi_{ij} \zeta_{ij}] )} \right].
\end{equation*}
By the result from \citep[Appendix B]{honorio}, applying the composition of independent sub-Gaussian quadratic bounds and combining the $2d_x$ terms, we obtain:
\begin{equation*}
\mathbb{E} \left[ \exp\left( \lambda \sum_{k=1}^{2d_x} (\tilde{\zeta}_{ij,k}^2 - \mathbb{E}[\tilde{\zeta}_{ij,k}^2]) \right) \right]
\le \exp\left( 16 \cdot 2d_x \lambda^2 \sigma_w^4  \right).
\end{equation*}
By replacing $\lambda$ by $\frac{\lambda \|\Sigma_{ij}\|}{2}$ to account for the factor in front of the quadratic form, we conclude:
\begin{equation*}
\mathbb{E} \left[ e^{\frac{\lambda}{2} ( \zeta_{ij}^\top \Psi_{ij} \zeta_{ij} - \mathbb{E}[\zeta_{ij}^\top \Psi_{ij} \zeta_{ij}] )} \right]
\le \exp\left( 8 \lambda^2 \|\Sigma_{ij}\|^2 \sigma_w^4 d_x \right),
\end{equation*}
which implies
\begin{equation*}
\mathbb{E} \left[ e^{\lambda ( w(i)^\top H_{ij} w(j) - \mathbb{E}[w(i)^\top H_{ij} w(j)] )} \right]
\le \exp\left( 8 \lambda^2 \|\Sigma_{ij}\|^2 \sigma_w^4 d_x \right),
\end{equation*}
for all $\lambda$ such that $|\lambda| \le \frac{1}{4 \sigma_w^2 \|\Sigma_{ij}\|}$.
That is,
\begin{equation*}
\mathbb{E} \left[ e^{\lambda \left( w(i)^\top H_{ij} w(j) - \mathbb{E}[w(i)^\top H_{ij} w(j)] \right)} \right]
\le \exp\left( 8 \lambda^2 \|H_{ij}\|^2 \sigma_w^4 d_x \right),
\end{equation*}
for all $\lambda$ such that $|\lambda| \le \frac{1}{4 \sigma_w^2 \|H_{ij}\|}$.

We first consider the deviation of the quadratic cost for a single trajectory $A' \sim \mathcal D_A$, $B' \sim \mathcal D_B$, and $w'(t) \sim \mathcal D_w$. From the previous derivation using the structure of the cost in terms of $\{H_i, H_{ij}\}$ and the sub-Gaussianity of $w(t)$, we obtain:
\begin{align*}
&\mathbb{E}\left[e^{\lambda \left( \mathbb{E}[C_q(K, X_K')] - C_q(K, X_K') \right)} \right] \\
&= \mathbb{E} \left[ \exp\left( \lambda \sum_{i=0}^{T-1} \left( \mathbb{E}[w(i)^\top H_i w(i)] - w(i)^\top H_i w(i) \right) \right.\right. \\
& \left.\left.+ \sum_{0 \le i < j \le {T-1}} \left( \mathbb{E}[w(i)^\top H_{ij} w(j)] - w(i)^\top H_{ij} w(j) \right) \right) \right], \\
&\le  \mathbb{E} \left[ \exp\left( \lambda \sum_{i=0}^T \left( \mathbb{E}[w(i)^\top H_i w(i)] - w(i)^\top H_i w(i) \right) \right.\right. \\
& \left.\left.+ \sum_{0 \le i < j \le {T}} \left( \mathbb{E}[w(i)^\top H_{ij} w(j)] - w(i)^\top H_{ij} w(j) \right) \right) \right],
\end{align*}
for all $|\lambda| \le  \frac{1}{4 \sigma_w^2 \max\left\{ \max_i \|H_i\|, \max_{i < j} \|H_{ij}\| \right\}} $.
We apply the previous bounds for the diagonal and off-diagonal terms. By Jensen's inequality and the fact that the moment generating function of a sum is upper bounded by the product of the individual MGFs (for sub-exponential variables), we obtain:
\begin{align*}
&\mathbb{E}\left[e^{\lambda \left( \mathbb{E}[C_q(K, X_K')] - C_q(K, X_K') \right)} \right] \\
&\le \exp\left( \lambda^2 \sigma_w^4 d_x \left( \sum_{i=0}^T 16 \|H_i\|^2 + \sum_{0 \le i < j \le T} 8 \|H_{ij}\|^2 \right) \right),
\end{align*}
Then the deviation between the expected and empirical cost can be written as
\begin{equation*}
\mathbb{E} \left[ e^{\lambda (C(K) - \hat{C}(K))} \right]
= \mathbb{E} \left[ e^{ \lambda \left( \mathbb{E}[C_q(K,X_K)] - \frac{1}{m} \sum_{i=1}^m C_q(K, X^{(i)}_K) \right) }\right].
\end{equation*}

We now use linearity of expectation to rewrite:
\begin{align*}
&\mathbb{E}[C_q(K, X_K)] - \frac{1}{m} \sum_{i=1}^m C_q(K, X^{(i)}_K)\\
&= \frac{1}{m} \sum_{i=1}^m \left( \mathbb{E}[C_q(K, X_K)] - C_q(K, X^{(i)}_K) \right).
\end{align*}

Therefore,
\begin{align*}
&\mathbb{E} \left[ e^{\lambda (C(K) - \hat{C}(K))} \right] \\
&= \mathbb{E} \left[ e^{ \lambda \cdot \frac{1}{m} \sum_{i=1}^m \left( \mathbb{E}[C_q(K, X_K)] - C_q(K, X^{(i)}_K) \right) } \right] \\
&= \mathbb{E} \left[ \prod_{i=1}^m e^{ \frac{\lambda}{m} \left( \mathbb{E}[C_q(K, X_K)] - C_q(K, X^{(i)}_K) \right) } \right].
\end{align*}

Since the $C_q(K, X^{(i)}_K)$ are independent (due to the independent system parameters and noise sequence), each random variable $\mathbb{E}[C_q(K, X_K)] - C_q(K, X^{(i)}_K)$ is also independent. Hence, the product of exponentials factorizes:
\begin{align*}
&\mathbb{E} \left[ \prod_{i=1}^m e^{ \frac{\lambda}{m} \left( \mathbb{E}[C_q(K, X_K)] - C_q(K, X^{(i)}_K) \right) } \right] \\
&= \prod_{i=1}^m \mathbb{E} \left[ e^{ \frac{\lambda}{m} \left( \mathbb{E}[C_q(K, X_K)] - C_q(K, X^{(i)}_K) \right) } \right].
\end{align*}

Thus, we conclude:
\begin{equation*}
\mathbb{E} \left[ e^{\lambda (C(K) - \hat{C}(K))} \right]
= \prod_{i=1}^m \mathbb{E} \left[ e^{ \frac{\lambda}{m} \left( \mathbb{E}[C_q(K, X_K)] - C_q(K, X^{(i)}_K) \right) } \right].
\end{equation*}

From the previous derivation for a single trajectory, we know that for all $\lambda' \in \left[0, \frac{1}{4 \sigma_w^2 \max\left\{ \max_i \|H_i\|, \max_{i < j} \|H_{ij}\| \right\}} \right]$, we have:
\begin{align*}
&\mathbb{E} \left[ e^{ \lambda' \left( \mathbb{E}[C_q(K, X_K)] - C_q(K, X^{(i)}_K) \right) } \right] \\
&\le \exp\left( \lambda'^2 \sigma_w^4 d_x \cdot \left( \sum_{i=0}^T 16 \|H_i\|^2 + \sum_{0 \le i < j \le T}8\|H_{ij}\|^2 \right) \right).
\end{align*}

We apply this bound with $\lambda' = \lambda / m$, and note that all $m$ terms are identical. This gives:
\begin{align*}
&\mathbb{E} \left[ e^{\lambda (C(K) - \hat{C}(K))} \right]
= \prod_{i=1}^m \mathbb{E} \left[ e^{ \frac{\lambda}{m} \left( \mathbb{E}[C_q(K, X_K)] - C_q(K, X^{(i)}_K) \right) } \right]
\\
&\le \left( \exp\left( \frac{\lambda^2}{m^2} \cdot \sigma_w^4 d_x \cdot \left( \sum_{i=0}^T 16 \|H_i\|^2 + \sum_{0 \le i < j \le T} 8 \|H_{ij}\|^2 \right) \right) \right)^m \\
&= \exp\left( \frac{\lambda^2}{m} \cdot \sigma_w^4 d_x \cdot \left( \sum_{i=0}^T 16 \|H_i\|^2 + \sum_{0 \le i < j \le T} 8 \|H_{ij}\|^2 \right) \right).
\end{align*}

This implies that the quadratic cost is sub-exponential with variance proxy:
\begin{equation*}
B_{cost}^2(K) := \sigma_w^4 d_x \left( 128 \sum_{i=0}^T \|H_i\|^2 + 64 \sum_{0 \le i < j \le T} \|H_{ij}\|^2 \right).
\end{equation*}
We now derive an explicit upper bound on $B_{cost}^2(K)$ in terms of system-dependent quantities $\rho_Z(K)$ and $\rho_M(K)$. Recall the definitions:
\begin{equation*}
\rho_Z(K) := \|Q\|_F + \|K\|^2 \|R\|_F, \quad \rho_M(K) := \rho_A + \rho_B \|K\|,
\end{equation*}
where $\rho_A := \max\{|a_1|, |a_2|\}$ and $\rho_B := \max\{|b_1|, |b_2|\}$. These provide bounds on $\|Z\|$ and $\|M\|$:
\begin{equation*}
\|Z\| \le \rho_Z(K), \quad \|M\| \le \rho_M(K).
\end{equation*}

Each $H_i$ corresponds to the contribution of $w(i)$ to the quadratic cost:
\begin{equation*}
H_i = M^{T-i}{}^\top \cdots M^\top Z M \cdots M^{T-i}.
\end{equation*}
Hence, we can bound:
\begin{equation*}
\|H_i\| \le \|Z\| \cdot \|M\|^{2(T - i)} \le \rho_Z(K) \cdot \rho_M(K)^{2(T - i)}.
\end{equation*}
 
The cross-term matrix $H_{ij}$ arises from the interaction between $w(i)$ and $w(j)$:
\begin{equation*}
H_{ij} = M^{T - i}{}^\top \cdots M^\top Z M \cdots M^{T - j},
\end{equation*}
so that
\begin{align*}
&\|H_{ij}\| \le \|Z\| \cdot \|M\|^{T - i + T - j} = \|Z\| \cdot \|M\|^{2T - i - j}\\
&\le \rho_Z(K) \cdot \rho_M(K)^{2T - i - j}.
\end{align*}

Substituting into the expression:
\begin{align*}
B_{cost}^2(K)
&\le \sigma_w^4 d_x \cdot \rho_Z(K)^2 \cdot \left(
128 \sum_{i=0}^T \rho_M(K)^{4(T - i)} \right.\\
&\left.+
64 \sum_{0 \le i < j \le T} \rho_M(K)^{4T - 2(i + j)}
\right).
\end{align*}
Combining with Eq.~\eqref{eq:thm-3}, this gives a compact upper bound on $B_{\text{cost}}^2$ in terms of $\rho_Z(K)$, $\rho_M(K)$, $\sigma_w^2$, $d_x$, and $T$.

For the admissible range of $\lambda$, we need the maximum spectral norm across all $H_i$ and $H_{ij}$. Since 
\begin{equation*}
    \max_{i} \|H_i\| = \|H_0\| \le \rho_Z(K) \cdot \rho_M(K)^{2T},
\end{equation*}
and 
\begin{equation*}
    \max_{i<j} \|H_{ij}\| = \|H_{01}\| \le \rho_Z(K) \cdot \rho_M(K)^{2T-1},
\end{equation*}
we get
\begin{equation*}
|\lambda| \le  \operatorname*{ess\,inf}_{K \in \mathcal{K}}\frac{1}{4 \sigma_w^2 \cdot \rho_Z(K) \cdot \rho_M(K)^{2T}} .
\end{equation*}
Hence, we have obtained Eq.~\eqref{eq: MGF Bcost} and the expression for $B_{cost}$ Eq.~\eqref{eq:Bcost} used in Eq.~\eqref{eq: PAC-Bayes Bound for Finite Controllers}, which completes the proof.
\end{proof}

The bound in Eq.~\eqref{eq: PAC-Bayes Bound for Finite Controllers} certifies any data-dependent posterior $P$
by upper bounding its Gibbs expected cost with the Gibbs empirical cost
and a complexity term involving $B_{\rm cost}$, $n$, and
$KL(P\|P_0)$. Unlike classical bounded-cost PAC-Bayes bounds, Eq.~\eqref{eq: PAC-Bayes Bound for Finite Controllers} applies
directly to the quadratic cost through the MGF bound. The finite set $\Gamma$ allows data-dependent selection of $\lambda$ while preserving the PAC-Bayes guarantee.

\subsection{PAC-Bayes bound for an infinite controller space}
Unlike the finite controller case, where all controllers can be explicitly evaluated, the infinite controller case requires an empirical estimate over the controller space. The PAC--Bayes bound for the infinite controller space is stated below. 

\begin{theorem}[PAC-Bayes bound for infinite controller space]
\label{thm:pb-mc-hoeffding}
$\qquad$\\Consider the system \eqref{eq:Time-invariant Linear Discrete-time System}--\eqref{eq:Time-invariant control law} under the stated Assumptions~\ref{assumption:sub-Gaussian-TI}.
Let $\mathcal K\subseteq \mathbb R^{d_u\times d_x}$ be an infinite controller space. Let $P_0$ be any data-independent distribution over $\mathcal K$, and assume the controller is bounded: $\|K\|\le B_k$ for all $K\in\mathcal K$. For any probability distribution $P$ over controllers, define
\begin{equation}\label{eq: tilde CLprime}
    \tilde C _{L'}(P)\;:=\;\frac{1}{L'}\sum_{j=1}^{L'} \hat C(K_j),
\end{equation}
where $K_1,\dots,K_{L'}\overset{\mathrm{i.i.d.}}{\sim} P$, which are then used to generate the dataset $\mathcal{S} =\{X_{K_j}^{(i)}\}^{i=1,\cdots ,n}_{j=1,\cdots,L'}$ according to the data-generating process. Let $\Omega\subset(0,+\infty)$ be a finite data-independent set. Let $\Gamma$ and $B_{\text{cost}}$ be defined as in Eq.~\eqref{eq: gamma} and Eq.~\eqref{eq:Bcost}, respectively, and define
\begin{align*}
&\rho_Z^{\max} \;:=\; \|Q\|_F + B_k^2 \|R\|_F,
V(\mathcal S) \;:=\; \max_{j=1,\dots,L'}\frac{1}{n}\sum_{i=1}^n \sum_{t=1}^T \|x^{(i)}_{K_j}(t)\|^2,\\
&C_{\max}(\mathcal S) \;:=\; \rho_Z^{\max}\,V(\mathcal S).
\end{align*}
Then $\forall\delta\in(0,1)$ and $\forall\delta'\in(0,1)$ with $\delta+\delta'<1$, 
\begin{align}\label{eq:thm2-mc}
\mathbb{P}\!&\left( \forall P,
\forall \lambda\in\Gamma,\;\bar C(P)
\;\le\;
\tilde C_{L'}(P)
\;+\;
C_{\max}(\mathcal S)\,\sqrt{\frac{1}{2L'}\,\ln\!\frac{2}{\delta'}}\right.\notag\\
&\left. \;+\;\frac{\lambda\,B_{\text{cost}}^2}{8n}\;+\;
\frac{KL(P\Vert P_0)\;+\;\ln\!\frac{card(\Gamma)}{\delta}}{\lambda}\right)\ge 1-\delta-\delta'.
\end{align}
\end{theorem}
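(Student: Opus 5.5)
The plan is to split the generalization gap into two pieces and control each with its own failure probability, then combine them with a union bound. For any $P$, write
\[
\bar C(P)-\tilde C_{L'}(P)=\bigl(\bar C(P)-\tilde C(P)\bigr)+\bigl(\tilde C(P)-\tilde C_{L'}(P)\bigr),
\]
where $\tilde C(P)=\mathbb E_{K\sim P}[\hat C(K)]$ is the Gibbs empirical cost over the (now infinite) controller space.

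\textbf{First term (PAC-Bayes part).} I will rerun the argument of Theorem~\ref{theorem: PAC-Bayes Bound for Controllers} with sums over $j$ replaced by integrals against $P_0$.
\begin{itemize}
\item The Donsker--Varadhan change of measure is valid for general measures with $KL(P\|P_0)<\infty$.
\item Markov's inequality is applied to $\mathbb E_{K\sim P_0}e^{\lambda[C(K)-\hat C(K)]}$, and Fubini exchanges $\mathbb E_{K\sim P_0}$ with the data expectation.
\item The per-controller MGF bound~\eqref{eq: MGF Bcost} then applies pointwise in $K$. This is legitimate because $B_{\text{cost}}$ and $\Gamma$ are defined through ess\,sup/ess\,inf over $\mathcal K$.
\item Finitely many $\lambda\in\Gamma$ are handled by splitting $\delta$ as $\delta/\mathrm{card}(\Gamma)$.
\end{itemize}
This gives, with probability at least $1-\delta$, simultaneously for all $P$ and all $\lambda\in\Gamma$,
\[
\bar C(P)\le\tilde C(P)+\frac{\lambda B_{\text{cost}}^2}{8n}+\frac{KL(P\|P_0)+\ln\frac{\mathrm{card}(\Gamma)}{\delta}}{\lambda}.
\]
The bound $\|K\|\le B_k$ ensures that $\rho_Z,\rho_M$ are uniformly bounded, so $B_{\text{cost}}<\infty$ and $\Gamma$ is nontrivial.

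\textbf{Second term (Monte Carlo part).} Here $\tilde C_{L'}(P)$ is an average of $L'$ i.i.d. copies of $\hat C(K_j)$ with $K_j\sim P$. I will apply Hoeffding's inequality conditionally, needing a range for $\hat C(K_j)$. Since $x(0)=0$ and $\|Q\|\le\|Q\|_F$,
\[
\hat C(K)\le \bigl(\|Q\|_F+\|K\|^2\|R\|_F\bigr)\frac1n\sum_i\sum_{t=1}^T\|x^{(i)}(t)\|^2\le\rho_Z^{\max}V(\mathcal S)=C_{\max}(\mathcal S),
\]
using $\|Z\|\le\rho_Z(K)\le\rho_Z^{\max}$ together with $x^\top Qx\le\|Q\|\,\|x\|^2$ for the terminal term. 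Hence $0\le\hat C(K_j)\le C_{\max}(\mathcal S)$ for every $j$, and two-sided Hoeffding gives $|\tilde C(P)-\tilde C_{L'}(P)|\le C_{\max}(\mathcal S)\sqrt{\ln(2/\delta')/(2L')}$ with probability at least $1-\delta'$. A union bound then yields~\eqref{eq:thm2-mc} with probability at least $1-\delta-\delta'$.

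\textbf{Main obstacle.} The delicate step is this Hoeffding application, because $C_{\max}(\mathcal S)$ is data-dependent and computed from the same samples $K_j$, so it is not a fixed range. I would first condition on the system and noise draws and view $\hat C(\cdot)$ as a fixed function. However, $V(\mathcal S)$ still depends on the $K_j$. To handle this, I would either argue via a conditional or self-normalized version (bounded differences with a random but a.s.\ valid envelope), or replace $V(\mathcal S)$ by a sup over $\|K\|\le B_k$ of the trajectory energy under the fixed realizations, which dominates it. The second option is the one I would try first.

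A secondary point is the interpretation of $\tilde C(P)$: for the mixture over $K$ to make sense, it must be defined with respect to the same realizations. I would address this by fixing the data-generating randomness $(A^{(i)},B^{(i)},W^{(i)})$ independently of $K$, so that $\hat C(K)$ is a well-defined random function of $K$.
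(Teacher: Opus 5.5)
Your proof has the same structure as the paper's. You obtain an event $E$, uniform over $P$ and $\lambda\in\Gamma$, by rerunning the proof of Theorem~\ref{theorem: PAC-Bayes Bound for Controllers} with sums replaced by integrals against $P_0$. You obtain an event $F$ from Hoeffding's inequality, using the envelope $0\le\hat C(K_j)\le\rho_Z^{\max}V(\mathcal S)$, derived exactly as the paper does. A union bound finishes. The step you flag as the main obstacle is precisely where the paper is not rigorous. It conditions on $\mathcal S$ and declares $Y_j=\hat C(K_j)$ conditionally i.i.d.\ with mean $\tilde C(P)$ and range $[0,C_{\max}(\mathcal S)]$. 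It justifies this by saying $\mathcal S$ is independent of the draw of the $K_j$, which contradicts the theorem's own setup, where $\mathcal S$ is generated from the $K_j$. Your diagnosis is correct, and fixing the realizations independently of $K$ is a sensible way to make $\tilde C(P)$ well defined.

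Neither of your remedies proves the inequality as stated, so the gap remains.

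\emph{Your preferred fix.} With common realizations and the range $C^\ast:=\rho_Z^{\max}\sup_{\|K\|\le B_k}V_K$, Hoeffding's inequality becomes legitimate. However, it produces the Monte Carlo term $C^\ast\sqrt{\ln(2/\delta')/(2L')}$. Since $C^\ast\ge C_{\max}(\mathcal S)$, the domination goes the wrong way: you get a weaker bound, and its constant is not computable from $\mathcal S$ alone.

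\emph{The conditional or self-normalized route.} This cannot succeed, because $\mathbb P(F)\ge 1-\delta'$ is false in general: an envelope computed from the sampled gains is not an envelope for the unsampled ones. Take $A=B=1$ almost surely and $\mathcal K=[-1,\tfrac12]$. Let $P$ put mass $p\ll 1/L'$ near $K=\tfrac12$, where the closed loop is $\tfrac32$ and the trajectory energy is of order $(9/4)^T$. Put the rest near $K=-1$, where the closed loop is near $0$ and the energy is of order $T$. With probability $(1-p)^{L'}\approx 1$, no $K_j$ lands near $\tfrac12$. Then $C_{\max}(\mathcal S)$ is of order $T$ times the noise variance, whereas $\tilde C(P)-\tilde C_{L'}(P)$ is of order $p\,(9/4)^T$. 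For large $T$ this exceeds $C_{\max}(\mathcal S)\sqrt{\ln(2/\delta')/(2L')}$.

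Hence Hoeffding's inequality cannot deliver the stated Monte Carlo term with the data-dependent $C_{\max}(\mathcal S)$. What your argument does establish is the variant with a range fixed before the $K_j$ are drawn.

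Note also that $F$ concerns only the particular $P$ from which the $K_j$ were sampled. Your union bound therefore yields the inequality for that $P$, not uniformly over all $P$ as the statement and the paper's proof claim.
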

\begin{proof}
The proof has two components:
(i) a Monte Carlo deviation bound that controls the error incurred when approximating the (intractable) expectation over $K\sim P$ by $\tilde C_{L'}(P)$, and
(ii) a PAC--Bayes bound that controls the difference between $\bar C(P)$ and $\tilde C(P)$.
The PAC--Bayes part reuses the argument of Theorem~\ref{theorem: PAC-Bayes Bound for Controllers} and its proof; we only highlight the few notational changes needed in the infinite-controller setting.

Consider the dataset
\[
\mathcal S
=\bigl\{X_{K_j}^{(i)}\bigr\}_{i=1,\dots,n}^{j=1,\dots,L'}.
\]
For each controller $K_j$ and each trajectory index $i$, denote the corresponding state and input at time $t$ by
\[
x^{(i)}_{K_j}(t),\qquad
u^{(i)}_{K_j}(t) = K_j x^{(i)}_{K_j}(t).
\]
Using the Frobenius inner product and $\|xx^\top\|_F = \|x\|^2$,
\begin{align*}
\bigl(x^{(i)}_{K_j}(t)\bigr)^\top Q\,x^{(i)}_{K_j}(t)
&= \mathrm{tr}\!\Bigl(Q\,x^{(i)}_{K_j}(t)\bigl(x^{(i)}_{K_j}(t)\bigr)^\top\Bigr) \\
&\le \|Q\|_F\, \bigl\|x^{(i)}_{K_j}(t)\bigl(x^{(i)}_{K_j}(t)\bigr)^\top\bigr\|_F
= \|Q\|_F\,\bigl\|x^{(i)}_{K_j}(t)\bigr\|^2.
\end{align*}
For the control part,
\begin{align*}
\bigl(u^{(i)}_{K_j}(t)\bigr)^\top R\,u^{(i)}_{K_j}(t)
&= \bigl(x^{(i)}_{K_j}(t)\bigr)^\top K_j^\top R K_j\,x^{(i)}_{K_j}(t) \\
&\le \|K_j^\top R K_j\|\,\bigl\|x^{(i)}_{K_j}(t)\bigr\|^2 \\
&\le \|K_j\|^2 \,\|R\|\,\bigl\|x^{(i)}_{K_j}(t)\bigr\|^2 \\
&\le B_k^2\,\|R\|_F\,\bigl\|x^{(i)}_{K_j}(t)\bigr\|^2,
\end{align*}
where we used $\|K_j\|\le B_k$ and $\|R\| \le \|R\|_F$.
Combining the two pieces,
\begin{align*}
C_q\!\bigl(K_j,X_{K_j}^{(i)}\bigr)
&\le \sum_{t=0}^T
\Bigl[\bigl(x^{(i)}_{K_j}(t)\bigr)^\top Q\,x^{(i)}_{K_j}(t)
+\bigl(u^{(i)}_{K_j}(t)\bigr)^\top R\,u^{(i)}_{K_j}(t)\Bigr] \\
&\le \bigl(\|Q\|_F + B_k^2\|R\|_F\bigr)
\sum_{t=1}^T \bigl\|x^{(i)}_{K_j}(t)\bigr\|^2.
\end{align*}
Averaging over $i=1,\dots,n$, the empirical cost of $K_j$ satisfies
\begin{align*}
\hat C(K_j)
&:= \frac{1}{n}\sum_{i=1}^n C_q\!\bigl(K_j,X_{K_j}^{(i)}\bigr) \\
&\le
\bigl(\|Q\|_F + B_k^2\|R\|_F\bigr)
\frac{1}{n}\sum_{i=1}^n\sum_{t=1}^T \bigl\|x^{(i)}_{K_j}(t)\bigr\|^2.
\end{align*}
By definition of $\rho_Z^{\max}$ and $V(\mathcal S)$,
\begin{equation}
0 \;\le\; \hat C(K_j)
\;\le\; \rho_Z^{\max}\,
\frac{1}{n}\sum_{i=1}^n\sum_{t=1}^T \bigl\|x^{(i)}_{K_j}(t)\bigr\|^2
\;\le\; C_{\max}(\mathcal S),
\label{eq:range-mc}
\end{equation}
for all controllers with $\|K_j\|\le B_k$, given the dataset $\mathcal S$.

We now quantify the error introduced by approximating the expectation over $K\sim P$ using finitely many samples $K_1,\dots,K_{L'}$.
Define
\begin{equation*}
Y_j := \hat C(K_j),\qquad j=1,\dots,L'.
\end{equation*}
Conditional on the trajectories $\mathcal S$, the random variables $Y_1,\dots,Y_{L'}$ are i.i.d.\ (because the $K_j$ are i.i.d.\ from $P$) and, by \eqref{eq:range-mc}, they are bounded as
\[
0 \;\le\; Y_j \;\le\; C_{\max}(\mathcal S),\qquad j=1,\dots,L'.
\]
By construction,
\begin{equation*}
\mathbb E\bigl[Y_j \,\big|\, \mathcal S\bigr]
= \mathbb E_{K_j\sim P}\bigl[\hat C(K_j)\,\big|\,\mathcal S\bigr]
= \tilde C(P),
\end{equation*}
since $\mathcal S$ is independent of the subsequent draw of $K_j$ from $P$.

Define the Monte Carlo estimate $\tilde C_{L'}(P)$ as in \eqref{eq: tilde CLprime},
\begin{equation*}
\tilde C_{L'}(P)
= \frac{1}{L'}\sum_{j=1}^{L'} Y_j.
\end{equation*}
Hoeffding's inequality for i.i.d.\ random variables in $[0,C_{\max}(\mathcal S)]$ yields, for any $t>0$,
\begin{equation*}
\mathbb P\!\left(
\tilde C(P) - \tilde C_{L'}(P) \ge t
\,\Big|\, \mathcal S
\right)
\le
\exp\!\left(
-\frac{2L' t^2}{C_{\max}(\mathcal S)^2}
\right).
\end{equation*}
Set the right-hand side equal to $\delta'/2$ (one-sided Hoeffding bound) and solve for $t$:
\[
t
=
C_{\max}(\mathcal S)\,
\sqrt{\frac{1}{2L'}\ln\!\frac{2}{\delta'}}.
\]
Therefore, for any $\delta'\in(0,1)$,
\begin{equation}
\mathbb P\!\left(
\tilde C(P)
\le
\tilde C_{L'}(P)
+
C_{\max}(\mathcal S)\,
\sqrt{\frac{1}{2L'}\ln\!\frac{2}{\delta'}}
\right)
\;\ge\; 1-\delta',
\label{eq:hoeffding-mc}
\end{equation}
where the probability is over the joint randomness of $K_1,\dots,K_{L'}$ and the trajectories in $\mathcal S$.

We replace the definition of $G$ in the proof of Theorem~\ref{theorem: PAC-Bayes Bound for Controllers} with the following expression.
\begin{align*}
G
:= \bar C(P)-\tilde C(P)
&= \int C(K_j)\,{\rm d}P(K_j)\;-\;\int \hat C(K_j)\,{\rm d}P(K_j) \\
&= \int \big[C(K_j)-\hat C(K_j)\big]\,{\rm d}P(K_j).
\end{align*}
By following the same sequence of derivation steps as in the proof of Theorem~\ref{theorem: PAC-Bayes Bound for Controllers}, we
obtain the PAC--Bayes bound for the infinite controller space.
The resulting bound has the same form as that in Eq.~\eqref{eq: PAC-Bayes Bound for Finite Controllers}, since the finite controller
space can be viewed as a special case of the infinite controller setting.

Define the two events
\begin{align*}
E := \Bigl\{&\forall P,\;\forall \lambda\in\Gamma:\;
\bar C(P)
\le
\tilde C(P)
+
\frac{\lambda B_{\text{cost}}^2}{8n}
\\ 
&+\frac{KL(P\Vert P_0) + \ln \frac{\mathrm{card}(\Gamma)}{\delta}}{\lambda}
\Bigr\},\\[0.3em]
F := \Bigl\{&
\tilde C(P)
\le
\tilde C_{L'}(P)
+
C_{\max}(\mathcal S)\,
\sqrt{\tfrac{1}{2L'}\ln \tfrac{2}{\delta'}}
\Bigr\}.
\end{align*}
We have $\mathbb P(E)\ge 1-\delta$, and  $\mathbb P(F)\ge 1-\delta'$ according to Eq.~\eqref{eq:hoeffding-mc}.

On the intersection $E\cap F$ we have, simultaneously for all posteriors $P$ and all $\lambda\in\Gamma$,
\begin{align*}
\bar C(P)
&\le
\tilde C(P)
+
\frac{\lambda B_{\text{cost}}^2}{8n}
+
\frac{KL(P\Vert P_0) + \ln \frac{\mathrm{card}(\Gamma)}{\delta}}{\lambda} \\
&
\;\le\;
\underbrace{\tilde C_{L'}(P)}_{\mathrm{MC\ estimate\ over\ }K}
\;+\;
\underbrace{C_{\max}(\mathcal S)\,\sqrt{\frac{1}{2L'}\,\ln\!\frac{2}{\delta'}}}_{\mathrm{MC\ deviation\ (Hoeffding)}}
+
\frac{\lambda B_{\text{cost}}^2}{8n}
\\&+
\frac{KL(P\Vert P_0) + \ln \frac{\mathrm{card}(\Gamma)}{\delta}}{\lambda},
\end{align*}
which is exactly the claimed inequality~\eqref{eq:thm2-mc}.

Finally, by the union bound,
\begin{align*}
\mathbb P(E\cap F)
&\;\ge\;
1-\mathbb P(E^c\cup F^c)
\;\ge\;
1-\bigl(\mathbb P(E^c)+\mathbb P(F^c)\bigr)\\&
\;\ge\;
1-(\delta+\delta').
\end{align*}
Thus~\eqref{eq:thm2-mc} holds with probability at least $1-\delta-\delta'$,
uniformly over all posteriors $P$ and all $\lambda\in\Gamma$.
\end{proof}

Compared to Eq.~\eqref{eq: PAC-Bayes Bound for Finite Controllers}, the infinite-space bound replaces
$\tilde C(P)$ by the Monte Carlo estimate $\tilde C_{L'}(P)$.
The additional Hoeffding term controls this approximation error
and decreases as $1/\sqrt{L'}$.

\section{Learning Controllers by Minimizing PAC-Bayes Bounds}
\label{sec:algorithm}
In this section, we discuss a novel learning algorithm as our Objective 2. In light of Theorem~\ref{theorem: PAC-Bayes Bound for Controllers} and Theorem~\ref{thm:pb-mc-hoeffding}, a natural choice is to find a data-dependent $P$ (posterior distribution) by minimizing the right-hand side of the PAC--Bayes bounds in Eq.~\eqref{eq: PAC-Bayes Bound for Finite Controllers} or~\eqref{eq:thm2-mc}. It turns out that by minimizing a similar expression, but replacing the term $B_{cost}$ with an empirical version (details below), yields better performance. We want to emphasize that the bounds in Theorem~\ref{theorem: PAC-Bayes Bound for Controllers} and~\ref{thm:pb-mc-hoeffding} apply to any learned distribution $P$, and hence also apply to our learned distribution. 

The quantity $\hat B_{cost}$ is defined based on the training trajectories $\mathcal{S} = \{X_{K_j}^{(i)}\}^{i=1,\dots,n}_{j=1,\dots,L}$ as follows. In the proof, we show that for each controller $K$, the analysis establishes a moment generating function inequality shown in Eq.~\eqref{eq: MGF Bcost},
where the expectation is taken with respect to the randomness in $(A,B,W)$ that generates the $n$ training trajectories under $K$, and $C(K)$ denotes the corresponding expected cost. Writing
$Z_K = C(K)-\hat C(K)$ and expanding the log-moment generating function
at $\lambda=0$ gives
\[
\log \mathbb{E}_{A,B,W} e^{\lambda Z_K}
= \frac{\lambda^2}{2}Var(Z_K) + o(\lambda^2)
= \frac{\lambda^2}{2}Var(\hat C(K)) + o(\lambda^2).
\]
Let $X_{K,i}$ denote the $i$-th simulated trajectory under controller $K$, and
define the per-trajectory quadratic cost
\[
C_i(K) := C_q\bigl(K, X^{(i)}_K\bigr), \quad i = 1,\dots,n.
\]
Then $\hat C(K)$ is the average of the $n$ i.i.d.\ trajectory costs
$C_1(K),\dots,C_n(K)$, so that we have
$
Var(\hat C(K)) = \frac{1}{n}Var\bigl(C_q(K,X_K)\bigr).
$
Hence, for small $\lambda$, the left-hand side of \eqref{eq: MGF Bcost} behaves like
\[
\log \mathbb{E}_{A,B,W} e^{\lambda( C(K)-\hat C(K))}
\approx \frac{\lambda^2}{2n}Var\bigl(C_q(K,X_K)\bigr).
\]

Comparing the leading $\lambda^2$ terms on both sides of
\eqref{eq: MGF Bcost} suggests that $B_{cost}(K)^2$ should be of
the order of $4 Var(C_q(K,X_K))$, i.e., suggesting
\[
B_{cost}(K) \approx 2\sqrt{Var(C_q(K,X_K))}.
\]

In practice, $Var(C_q(K,X_K))$ is unknown. For each controller $K_j$, we use the $n$ trajectories $\{X_{K_j}^{(i)}\}_{i=1}^n$ in $\mathcal{S}$ and their corresponding costs $C_q\bigl(K_j, X^{(1)}_{K_j}\bigr),\dots,C_q\bigl(K_j, X^{(n)}_{K_j}\bigr)$ to estimate the variance via
\[
\widehat{Var}(C_q(K_j,X_{K_j}))
=
\frac{1}{n-1}\sum_{i=1}^n (C_q(K_j,X^{(i)}_{K_j}) - \hat C(K_j))^2,
\]
and define the empirical 
\[
\hat B_{cost}(K_j)
=
2 c_B \sqrt{\widehat{Var}(C_q(K_j,X_{K_j}))},
\]
where $c_B \ge 1$ is a safety factor. To obtain a single global constant,
we set
\begin{equation}\label{eq: empircal Bcost}
    \hat B_{cost}
=
\max_{1\le j\le L} \hat B_{cost}(K_j).
\end{equation}
This data-driven $\hat B_{cost}$ is then used in the
training objective as a numerically reasonable approximation of the theoretical constant appearing in \eqref{eq: MGF Bcost}.

\subsection{Finite controller space: PAC-Bayes controller learning algorithm}

We now consider Case~1, $\mathcal{K}=\{K_1,\dots,K_L\}$. For the controller set $\mathcal{K}$, a trajectory dataset $\mathcal{S}=\{X_{K_j}^{(i)}\}^{i=1,\dots,n}_{j=1,\dots,L}$ can be generated using the data-generating process described in Section~\ref{sec:thm}. The training objective is the right-hand side of inequality~\eqref{eq: PAC-Bayes Bound for Finite Controllers} with $B_{cost}$ replaced by $\hat B_{cost}$. The specific learning procedure is shown in Algorithm~\ref{alg:PAC-Bayes controller learning for time-invariant linear discrete-time system-finite}.
\begin{algorithm}
    \caption{PAC-Bayes controller learning for finite controller space}\label{alg:PAC-Bayes controller learning for time-invariant linear discrete-time system-finite}
    \begin{algorithmic}[1]
        \State \textbf{Inputs:}
        \State $\mathcal{S} =\{X_{K_j}^{(i)}\}^{i=1,\cdots ,n}_{j=1,\cdots,L}$: trajectory dataset generated according to the data-generating process described in Section~\ref{sec:thm}, with $L$ controllers and $n$ samples per controller
 
        \State $\delta\in(0,1)$: confidence parameter 
        \State $P_0$: prior distribution over controller space $\mathcal K$     
        \State $\Gamma$: a finite set of candidate values for the parameter $\lambda$ 
        \State Compute empirical $\hat B_{cost}$ according to Eq.~\eqref{eq: empircal Bcost}
        \State \textbf{Learning stage:}
\For{$\lambda \in \Gamma$}
  \[P_\lambda \in \arg\min_{P}
    \Bigl[
      \tilde C(P)
      + \frac{\lambda \hat B_{cost}^{2}}{8n}
      + \frac{KL(P\Vert P_0)+\ln \frac{card(\Gamma)}{\delta}}{\lambda}
    \Bigr].
  \]
\EndFor
\[
  \lambda^\star \in \arg\min_{\lambda \in \Gamma}
  \Bigl[
    \tilde C(P_\lambda)
    + \frac{\lambda \hat B_{cost}^{2}}{8n}
    + \frac{KL(P_\lambda\Vert P_0)+\ln \frac{card(\Gamma)}{\delta}}{\lambda}
  \Bigr],
\]
\State and set $P^\star_{PAC} \gets P_{\lambda^\star}$.

        \State \textbf{Outputs:} 
        \State $\lambda^\star$, $P^\star_{PAC}$: the optimizers of the minimization problem   
        \State $K \sim P^\star_{PAC}$: controller sampled from the learned posterior
    \end{algorithmic}
\end{algorithm}
In this finite setting, the resulting optimization problem is convex and can be solved efficiently using standard solvers such as \textit{CVX}~\citep{cvx}.

\subsection{Infinite controller space: PAC-Bayes controller learning algorithm}
Consider Case~2 shown in Section~\ref{sec:problem}, where $\mathcal{K} \subseteq \mathbb{R}^{d_u \times d_x}$. We draw $L'$ controllers $\{K_j\}_{j=1}^{L'}$ independently from the initial controller distribution $P$, and use each sampled controller $K_j$ to generate a trajectory dataset $\mathcal{S} =\{X_{K_j}^{(i)}\}^{i=1,\cdots ,n}_{j=1,\cdots,L'}$ according to the data-generating process described in Section~\ref{sec:thm}. 
In contrast to the finite-controller case, where $P$ can be optimized directly, the parameterized distribution $P$ is optimized by SGD using stochastic descent directions of the PAC-Bayes bound. In our SGD procedure, the distribution $P$ is parameterized by $\theta$ (e.g., mean, variance, or interval bounds), and optimization is performed over $\theta$ to update $P$ accordingly, hereafter denoted by $P_\theta$. Note that this is not a standard SGD, since the training objective does not admit a closed-form gradient and can only be evaluated via Monte Carlo sampling. Consequently, 
the descent direction must be inferred from these Monte Carlo estimates. That is, the distribution $P_\theta$ is then replaced with an updated one, and this procedure is repeated iteratively so that the training objective is progressively minimized. Let $\Phi(P_\theta)$ denote the training objective, defined as the right-hand side of inequality~\eqref{eq:thm2-mc} with $B_{cost}$ replaced by $\hat B_{cost}$. For $K\sim P_\theta$, consider $\mathcal{S}_K = \{X_{K}^{(i)}\}_{i=1}^{n} \subseteq \mathcal{S}$ for the trajectories generated under $K$. Define
\begin{align}\label{eq:phi}
\phi(\mathcal{S}_K)
\;:=\;
&
\hat C(K)
\;+\;
C_{\max}(\mathcal{S})\,\sqrt{\frac{1}{2L'}\,\ln\!\frac{2}{\delta'}}
\;+\;
\frac{\lambda\,\hat B_{cost}^2}{8n}\notag\\
&\;+\;
\frac{KL(P_\theta\Vert P_0)\;+\;\ln\!\frac{card(\Gamma)}{\delta}}{\lambda}.
\end{align}
The learning procedure is presented in Algorithm~\ref{alg:PAC-Bayes controller learning for time-invariant linear discrete-time system-infinite}.
\begin{algorithm}
\caption{PAC-Bayes controller learning for infinite controller space}\label{alg:PAC-Bayes controller learning for time-invariant linear discrete-time system-infinite}
\begin{algorithmic}[1]

\State \textbf{Inputs:}
\begin{itemize}
  \item $\theta_0$: initial posterior distribution parameters 
  \item $\delta,\delta'\in(0,1)$: confidence parameters 
  \item $P_0$: prior distribution over controller space $\mathcal K$
  \item Step size $\eta>0$, smoothing parameter $h>0$
  \item ${\mathrm{Iter}}$: number of iterations 
  \item $L'$: number of controllers (for Monte Carlo over $K\sim P_\theta$)
  \item $\Gamma$: a finite set of candidate values for the parameter $\lambda$
\end{itemize}
\State \textbf{Learning Stage:}
\State Initialize $\theta \gets \theta_0$

\For{$i = 1$ to ${\mathrm{Iter}}$}
  \State Sample random perturbation $\Delta \sim \mathcal{N}(0,I_{\dim(\theta)})$ 
  \State Construct perturbed parameters $\theta' \gets \theta + h\,\Delta$
  \State Monte Carlo sampling of controllers:
  \[
     \{K_j\}_{j=1}^{L'} \overset{\text{i.i.d.}}{\sim} P_\theta, 
     \qquad
     \{K'_j\}_{j=1}^{L'} \overset{\text{i.i.d.}}{\sim} P_{\theta'}
  \]

  \For{$j = 1$ to $L'$}
     \State Generate trajectories for controller $K_j$:
     \[
     \mathcal{S}_{K_j} = \{X_{K_j}^{(i)}\}_{i=1}^{n},
     \qquad
     \mathcal{S}_{K'_j} = \{X_{K'_j}^{(i)}\}_{i=1}^{n}
     \]
     according to the data-generating process described in Section~\ref{sec:thm}.
  \EndFor
  \State Compute empirical $\hat B_{cost}$ according to Eq.~\eqref{eq: empircal Bcost}
  \State Evaluate the upper-bound objective (Monte Carlo averages) according to Eq.~\eqref{eq:phi}:
  \[
  \Phi(P_\theta) \gets \frac{1}{L'} \sum_{j=1}^{L'} \phi(\mathcal{S}_{K_j}),
  \qquad
  \Phi(P_{\theta'}) \gets \frac{1}{L'} \sum_{j=1}^{L'} \phi(\mathcal{S}_{K'_j})
  \]

  \State Estimate the stochastic gradient:
  \[
  \hat{G}_\theta \gets \frac{\Phi(P_{\theta'}) - \Phi(P_\theta)}{h}\,\Delta
  \]

  \State Gradient descent update:
  \[
  \theta \gets \theta - \eta\,\hat{G}_\theta
  \]
\EndFor

\State \textbf{Outputs:}
\begin{itemize} \item $P_\theta$: learned distribution  over $\mathcal K$ 
\item $K \sim P_\theta$: controller sampled from the learned posterior
\end{itemize}
\end{algorithmic}
\end{algorithm}

\section{Numerical Experiments}
\label{sec:experiment}


\textbf{Example 1} (Finite controller space -- bound verification).
The time horizon is $T = 20$, the initial state is $x_0 = 0 \in \mathbb{R}^2$, and
$Q = I_2$ and $R = 0.1 I_1$.
The system matrices are drawn from truncated Gaussian distributions with
$a_1 = -0.3$, $a_2 = 0.3$ for $A$ and $b_1 = -0.3$, $b_2 = 0.3$ for $B$.
The system matrices $\mu_A$ and $\mu_B$  have entries taking values in the intervals $[-0.3,0.3]$.
Each realization of the system matrices is obtained by sampling its entries from this truncated Gaussian distribution, and the corresponding standard deviations of the entries lie in the range $[0, 0.1]$.
The sub-Gaussian parameter for the process noise is set to $\sigma_w = 0.5$.
The process noise is modeled as $w_t \sim \mathcal N(0,\Sigma_w)$ for
$t = 0,\dots,T-1$, where $\Sigma_w \in \mathbb{R}^{d_x \times d_x}$ is a
covariance matrix, and the corresponding standard deviations of the entries of $w_t$ lie in the range $[0.4,\sigma_w]$.
Consider controllers of the form $K = [k_1,k_2]$ and define a finite controller space
$
\mathcal{K} = \{K_1,\dots,K_{25}\}, \quad K_i = [k_{1,i}\;k_{2,i}],
$
where $\{k_{1,i}\}$ and $\{k_{2,i}\}$ are chosen as 5 uniformly spaced points from $[0,0.3]$ and $[-0.6,-0.3]$, respectively.
The candidate set of $\lambda$ is $\Gamma=\{2.85,3.76,4.94,6.51,8.56\}$.
The prior $P_0$ over $\mathcal{K}$ is uniform.
The confidence parameter is $\delta = 0.05$.
For the number of trajectories per controller, we take
$
n \in \{10,20,\cdots,100\},
$
and for each $n$ we apply Algorithm~\ref{alg:PAC-Bayes controller learning for time-invariant linear discrete-time system-finite}.
The expected cost is estimated using an independent test set of 100 trajectories per controller.
\begin{figure}[htbp]
  \centering
\includegraphics[width=0.5\textwidth]{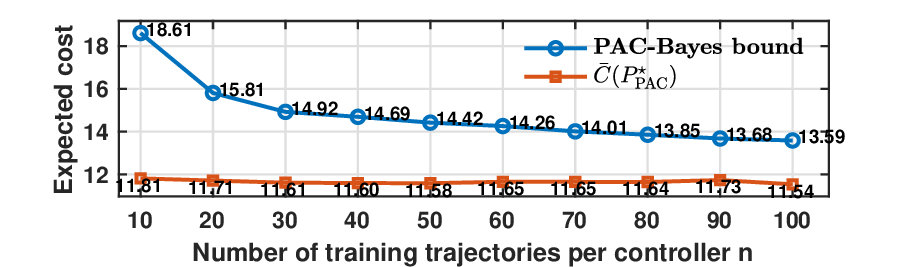} 
  \caption{\footnotesize Comparison of PAC-Bayes upper bounds and expected cost, across varying training trajectories per controller, for a time-invariant linear discrete-time system with a finite controller space.}
  \label{fig:ex1-bound verification}
\end{figure}

Figure~\ref{fig:ex1-bound verification} validates the PAC-Bayes bound.
The expected costs of learned $P^\star_{PAC}$ remain below the corresponding PAC-Bayes upper bound for all $n$, and the bound becomes tighter as $n$ increases. In this example, even a small number of training trajectories, for example $n=10$ per controller, already yields a learned distribution $P_{\mathrm{PAC}}^\star$ that works well on the test trajectories.
Thus, the expected cost changes only mildly as $n$ increases, since larger $n$ mainly tightens the PAC-Bayes bound rather than changing the learned posterior.
For higher-dimensional systems, larger $n$ may help learn a better posterior.

\textbf{Example 2} (Controller evaluation).
To further evaluate the controller learned by our PAC-Bayes approach, we consider a modified version of Example~1 in which the classical finite-horizon LQG controller is globally optimal for the true underlying system.
The same basic setup as in Example~1 is reused, with only the system and the reference controller changed.
The system matrices $\mu_A$ and $\mu_B$ have entries taking values in the intervals $[-2,2]$.
To ensure that the LQG controller is globally optimal for the true dynamics, the standard deviations of the parameter distributions are set to zero, so that $A = \mu_A$ and $B = \mu_B$ are fixed.
Given $(A,B)$, the finite-horizon LQG controller is computed and its expected cost is evaluated under the resulting time-varying gains $\{K(t)^{LQG}\}_{t=0}^{T-1}$, i.e.\ $u(t) = K(t)^{LQG} x_t$.
The corresponding finite-horizon LQG cost is $J(K(t)^{LQG};A,B)$, where $J$ is defined in Eq.~\eqref{eq:J_KAB}.
In this example, a finite controller space
$
\mathcal{K} = \{K_1,\dots,K_{25}\}, \quad
K_i = [k_{1,i}\;k_{2,i}],
$
is defined, where $\{k_{1,i}\}$ and $\{k_{2,i}\}$ are 5 uniformly spaced points in
$[0.75,1.25]$ and $[-1,-0.5]$, respectively.
The candidate set of $\lambda$ is
$
\Gamma = \{0.0956,\;0.0276,\; 0.0027,\; 0.0019,\;2.65\times 10^{-4},\; 1\times 10^{-4},\; 4.79\times 10^{-5}\}.
$
Our PAC-Bayes algorithm does not have access to $(A,B)$ and only uses the training trajectories to learn a posterior $P^\star_{PAC}$ over $\mathcal{K}$.
The goal is to check whether the learned $P^\star_{PAC}$ can achieve a closed-loop performance comparable to the LQG controller with full knowledge of $(A,B)$.

\begin{figure}[htbp]
  \centering
\includegraphics[width=0.5\textwidth]{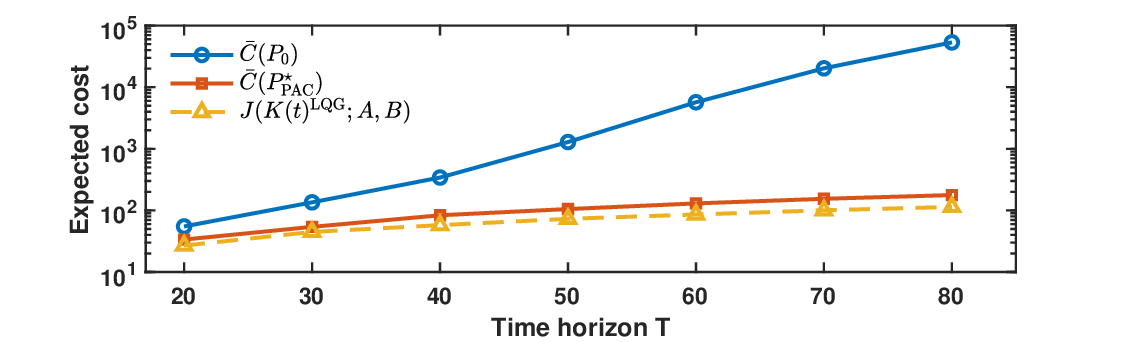} 
  \caption{\footnotesize Comparison of the expected costs of the prior $P_0$, the learned posterior $P^\star_{PAC}$, and the finite-horizon LQG controller, denoted by $\bar C(P_0)$, $\bar C(P^\star_{PAC})$, and $J(K(t)^{\mathrm{LQG}};A,B)$, respectively, as the horizon length $T$ varies with a fixed number of training trajectories per controller $n=10$. }
  \label{fig:ex2-varying T}
\end{figure}

Figure~\ref{fig:ex2-varying T} evaluates our learned controller by comparing its expected cost $\bar C(P^\star_{PAC})$ with those of the prior $P_0$ and the finite-horizon LQG controller. As the time horizon $T$ increases while the number of training trajectories per controller is fixed at $n=10$, the expected costs of all three controllers grow. The learned posterior $P^\star_{PAC}$ consistently achieves a substantial reduction in cost relative to the prior $P_0$, with the gap widening to several orders of magnitude for large time horizons. At the same time, $\bar C(P^\star_{PAC})$ remains very close to the LQG benchmark $J(K(t)^{\mathrm{LQG}};A,B)$ across all values of $T$. These results demonstrate that the PAC-Bayes learning procedure is highly effective: the learned controller distribution $P^\star_{PAC}$ achieves performance comparable to the LQG controller while clearly outperforming the prior.

\textbf{Example 3} (Infinite controller space -- performance improvement over the prior).
This example is modified from Example~1. Unless otherwise stated, the basic
finite-horizon setup and cost matrices are the same as in Example~1.
The truncation intervals for the system matrices are changed to
$a_1=-1$, $a_2=1$ for $A$ and $b_1=-1$, $b_2=1$ for $B$.
The mean matrices $\mu_A$ and $\mu_B$ are sampled entrywise from $[-1,1]$,
and the corresponding standard deviations are sampled from $[0,0.1]$.
The process-noise level is set to $\sigma_{w,0}=0.25$, and the entrywise
standard deviations of the process noise are sampled from
$[0.2,\sigma_{w,0}]$.
Instead of the finite controller space used in Example~1, we consider an infinite controller space with static gains $K\in\mathbb{R}^{1\times2}$.
Both the prior $P_0$ and posterior $P_\theta$ are modeled as product truncated Gaussian distributions over the entries of $K$, where each entry satisfies
$
K_{i}\sim\mathcal N(\mu_{i},\sigma_{i}^2)
$
truncated to $[L_{i},U_{i}]$.
The posterior parameter is
$
\theta=(\mu_K,\sigma_K,L_K,U_K).
$
The prior parameters are initialized as
$
\mu_K=[-0.5,-1.5],$
$\sigma_K=[0.25,0.25],
$
with support bounds
$
L_K=[-0.575,-1.625],$
$U_K=[0.25,-0.25].
$
The posterior is re-initialized from the same prior for each value of $n$, and the posterior learning is run for $10$ iterations.
For the number of training trajectories per controller, we take
$
n\in\{10,20,30,40,50\}.
$
For each $n$, we apply
Algorithm~\ref{alg:PAC-Bayes controller learning for time-invariant linear discrete-time system-infinite}
with $L'=10$, $\delta=0.5$, and $\delta'=0.25$.
After learning $P_\theta$, we estimate its expected cost by sampling $100$
controllers from the learned posterior and evaluating each controller using
$100$ test trajectories.
The fixed prior $P_0$ is evaluated in the same way and used as a reference.
\begin{figure}[htbp]
  \centering
\includegraphics[width=0.5\textwidth]{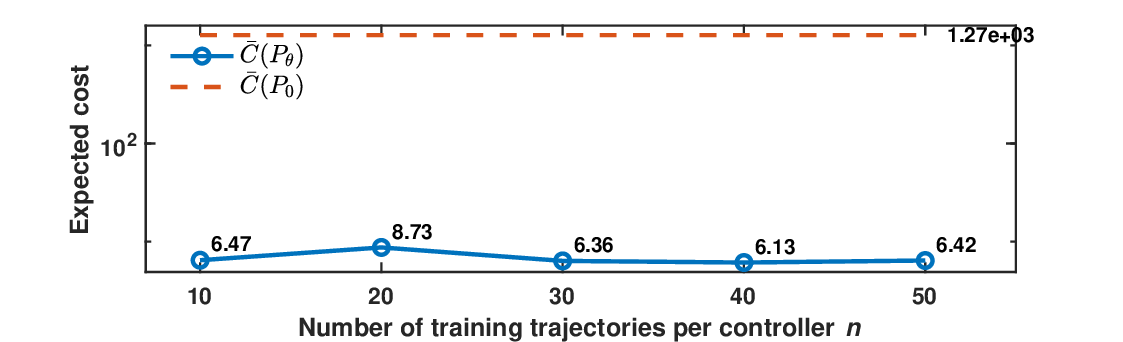} 
\caption{\footnotesize Comparison of expected costs under the prior $P_0$ and the learned posterior $P_\theta$ as the number of training trajectories per controller varies.}
  \label{fig:ex3}
\end{figure}

Figure~\ref{fig:ex3} compares the expected costs under the prior $P_0$ and the learned posterior $P_\theta$ for different numbers of training trajectories per controller $n$.
The prior has a much larger expected cost, around $1270$, whereas the learned posterior achieves costs approximately in the range of $6$--$9$.
Thus, the learned posterior substantially improves performance over the prior, reducing the expected cost by more than two orders of magnitude.
This indicates that the PAC-Bayes learning procedure effectively concentrates the posterior on low-cost controllers in the infinite controller space.
The mild fluctuations arise from the stochastic learning over $\theta=(\mu_K,\sigma_K,L_K,U_K)$, where only the posterior after 10 iterations is reported.
Nevertheless, the learned posterior already substantially outperforms the prior.

\section{Conclusion and Future Work}
\label{sec:conclusion}
In this paper, we present a PAC-Bayes framework for learning controllers under unknown linear dynamics. 
The method handles unmodified quadratic costs, and is numerically applied for both finite and infinite controller spaces. When LQG is optimal, our method achieves comparable performance. In future work, we plan to develop adaptive algorithms to automatically identify suitable controller spaces. 


\bibliography{ifacconf}

\end{document}